\numberwithin{equation}{section}
\newcommand{\dd}{\mathrm{d}}
\newcommand{\e}{\mathrm{e}}
\newcommand{\ii}{\mathrm{i}}
\newcommand{\eps}{\varepsilon}
\newcommand{\Id}{\mathbbm{1}}
\newcommand{\N}{\mathbb{N}}
\newcommand{\R}{\mathbb{R}}
\newcommand{\T}{\mathbb{T}}
\newcommand{\Z}{\mathbb{Z}}
\theoremstyle{plain} 
\newtheorem{theorem}{Theorem}[section]
\newtheorem*{theorem*}{Theorem}
\newtheorem{lemma}[theorem]{Lemma}
\newtheorem*{lemma*}{Lemma}
\newtheorem{corollary}[theorem]{Corollary}
\newtheorem*{corollary*}{Corollary}
\newtheorem{proposition}[theorem]{Proposition}
\newtheorem*{proposition*}{Proposition}
\newtheorem*{conjecture*}{Conjecture}
\theoremstyle{remark}
\theoremstyle{definition} 
\newtheorem*{definition*}{Definition}
\newtheorem*{example*}{Example}
\newtheorem*{assumption*}{Assumption}
\newtheorem{remark}[theorem]{Remark}
\newtheorem*{remark*}{Remark}
\title[Periodic Concentrated NLS]{Well-posedness of the periodic nonlinear Schr\"odinger equation with concentrated nonlinearity}
\author{Jinyeop Lee}
\address[J. Lee]{Department of Applied Mathematics, Kyung Hee University, 1732 Deogyeong-daero, Giheung-gu, Yongin-si, Gyeonggi-do, South Korea}
\email{jinyeop.lee@khu.ac.kr}
\author{Andrew Rout}
\address[A. Rout]{Dipartimento di Matematica, Politecnico di Milano, P.zza Leonardo da Vinci 32, 20133, Milano, Italy.}
\email{andrewjames.rout@polimi.it}
\begin{document}

\begin{abstract}
We study the solution theory of the nonlinear Schrödinger equation with a concentrated nonlinearity on the torus. In particular, we establish existence and uniqueness of global energy-conserving solutions for initial data in $H^1$. We also prove local well-posedness of the concentrated nonlinear Schr\"odinger equation below the energy space but above the endpoint of Sobolev embedding theorem. Our methods are based on compactness results and exploiting the Volterra integral equation structure of the problem. To our knowledge, this is the first rigorous solution theory for the concentrated nonlinear Schr\"odinger equation on the circle.
\end{abstract}

\maketitle

\section{Introduction}
The nonlinear Schrödinger equation (NLS) is an important equation in a number areas of physics, serving as an effective equation for a microscopic system on a macroscopic scale and is used in quantum optics. In this paper, we consider the case of an NLS whose nonlinearity affects the evolution at only a single point --- i.e. an NLS with a {\it concentrated nonlinearity}. Such an equation in one spatial dimension can be written as
\[
\ii \partial_t u + \Delta u =  \delta |u|^{2}u\,.
\]
This equation can be used to model a number of different physical systems, such as when a group of electrons experience tunnelling through a double-well potential, or for modelling an impurity in a medium. For physical applications of the equation, we direct the reader to the introduction of \cite{Ten23}, and the references within. Linear equations with point interactions, such as
\begin{equation}
\label{linear_delta}
    \ii \partial_t u + \Delta u =  \alpha \delta  u\,.
\end{equation}
are rigorously constructed via the theory of {\it self-adjoint extensions of symmetric operators}, see for example \cite{AGHK88,FT00}. These equations were first extended to nonlinear interactions in the work of \cite{AT01}.

The {\it concentrated nonlinear Schrödinger equation} in one spatial dimension is given by
\begin{equation}
\label{concentrated_NLS_introduction}
\left\{\begin{alignedat}{2}
\ii \partial_t u + \Delta u &=  \delta |u|^{2}u\,, \\
u(x,0) &= u_0(x) \in H^s(X)\,,
\end{alignedat}\right.
\tag{cNLS}
\end{equation}
where $\delta$ denotes the Dirac delta function.
We note that for a continuous function $f$, one can interpret the product of $f$ with $\delta$ as $\delta f(0)$. In particular, the product is well defined for $f \in H^s(\T)$, for $s > \frac{1}{2}$. Since all the solutions we will consider in this paper to the \eqref{concentrated_NLS_introduction} will be continuous in space, we can always define the product $\delta |u|^2u$.

The \eqref{concentrated_NLS_introduction} formally has two conserved quantities, namely the mass and energy, respectively defined as
\begin{align*}
M(u) &:= \int_{X} |u(x,t)|^2 \, \dd x \,,\\
E(u) &:= \int_{X} |\nabla u(x,t)|^2 \, \dd x + \frac{1}{2} |u(0,t)|^4 \,.
\end{align*}

The \eqref{concentrated_NLS_introduction} has been heavily studied in the case where $X=\R^d$. Indeed, the well-posedness of \eqref{concentrated_NLS_introduction} in one dimension was proved in \cite{AT01}, which was extended to three dimensions in \cite{ADFT03}, before being proved in two dimensions in \cite{CCT19}. However, to the best of the authors' knowledge, there is no solution theory for the \eqref{concentrated_NLS_introduction} in the setting that $X = \T$. The first key result of this paper is the following theorem, which establishes well-posedness of the \eqref{concentrated_NLS_introduction} for initial conditions in the energy space.
\begin{theorem}
\label{existence_uniqueness_theorem}
Suppose that $u_0 \in H^1(\T)$. Then, for any $T > 0$, there is a unique function $u \in C([0,T]; H^1(\T))$ such that $u$ solves \eqref{concentrated_NLS_introduction}. Moreover, one has
\begin{align*}
M(u(t)) = M(u_0) \quad\text{and}\quad E(u(t)) = E(u_0)
\end{align*}
for any $t \in [0,T]$. Moreover the $C([0,T];H^1(\T))$ norm of the solution depends continuously on the initial data.
\end{theorem}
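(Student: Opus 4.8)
The plan is to construct the solution as a limit of regularized problems, extract \emph{a priori} bounds from the (dissipated) mass and energy, pass to the limit using the one–dimensional embedding $H^1(\T)\hookrightarrow C(\T)$, and finally upgrade to a global, energy–conserving, unique solution. Concretely, I would replace the singular nonlinearity $\delta\,|u|^2u$ by $\delta_\eps\,|u|^2u$, where $\delta_\eps$ is a smooth mollification of the Dirac mass, and simultaneously add a small Ginzburg--Landau dissipation, solving
\[
\ii\,\partial_t u^{a,\eps} + (1-\ii a)\,\Delta u^{a,\eps} = \delta_\eps\,|u^{a,\eps}|^2 u^{a,\eps},\qquad u^{a,\eps}(0)=u_0,
\]
for $a,\eps>0$. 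Inverting the time derivative gives $\partial_t u^{a,\eps}=(a+\ii)\Delta u^{a,\eps}-\ii\,\delta_\eps|u^{a,\eps}|^2u^{a,\eps}$, whose leading symbol $(a+\ii)\Delta$ is parabolic for $a>0$; standard fixed–point theory in $H^1(\T)$ therefore yields a unique smooth local solution, and the defocusing sign of the nonlinearity makes the regularized energy a Lyapunov functional that promotes this to a global solution for each fixed $a$.

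The heart of the argument is a pair of \emph{a priori} estimates uniform in $a$ and $\eps$. Pairing the equation with $\bar u^{a,\eps}$ and then with $\partial_t\bar u^{a,\eps}$ and taking real and imaginary parts, I would derive dissipation identities for the mollified mass $\int|u^{a,\eps}|^2$ and the mollified energy $\int|\nabla u^{a,\eps}|^2+\tfrac12\int\delta_\eps|u^{a,\eps}|^4$; the dissipative contributions (such as $-a\|\nabla u^{a,\eps}\|_{L^2}^2$) carry a favourable sign, so both functionals are non–increasing, giving $\sup_{t\in[0,T]}\|u^{a,\eps}(t)\|_{H^1(\T)}\le C(M(u_0),E(u_0))$ independent of $a,\eps$. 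Feeding this back into the equation controls $\partial_t u^{a,\eps}$ in $H^{-1}(\T)$, so the Aubin--Lions lemma produces a subsequence converging strongly in $C([0,T];H^{s}(\T))$ for any $s<1$; taking $s>\tfrac12$ yields uniform convergence of the traces $u^{a,\eps}(0,t)$, which is exactly what is needed to pass to the limit in the pointwise factor $|u^{a,\eps}(0,\cdot)|^2u^{a,\eps}(0,\cdot)$, while $\delta_\eps\rightharpoonup\delta$ handles the concentration and the inviscid limit $a\to0$ removes the dissipation. The limit $u\in C([0,T];H^1(\T))$ then solves \eqref{concentrated_NLS_introduction} distributionally, and passing to the limit in the dissipation identities gives exact conservation of $M$ and $E$. (Equivalently, one may smooth only the Dirac mass, keeping a genuine Hamiltonian NLS that conserves a mollified energy — the ``concentrated limit of a smoothed NLS'' — and feed the same compactness into the argument.)

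For uniqueness I would \emph{not} argue directly in $H^1$: writing $w=u-v$ for two solutions with the same data, the $L^2$ energy identity gives $\tfrac{d}{dt}\|w\|_{L^2}^2\lesssim|w(0,t)|^2$, and the trace $|w(0,t)|$ is controlled only by $\|w\|_{H^1}$, so the estimate does not close. Instead I would reduce to the \emph{charge} $q(t):=u(0,t)$. Evaluating the Duhamel formula at the concentration point produces the closed, weakly singular Volterra equation
\[
q(t)=\bigl(\e^{\ii t\Delta}u_0\bigr)(0)-\ii\int_0^t K(t-s)\,|q(s)|^2q(s)\,\dd s,\qquad K(\tau):=\bigl(\e^{\ii\tau\Delta}\delta\bigr)(0),
\]
whose kernel satisfies $|K(\tau)|\lesssim\tau^{-1/2}$ as $\tau\to0^+$ by Poisson summation, hence is integrable. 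Since the cubic map is locally Lipschitz on bounded sets, a Gronwall argument adapted to the Abel singularity forces the two charges to coincide, and the linear Duhamel reconstruction then gives $u=v$.

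Global existence is then immediate: the defocusing sign makes both terms of $E$ non–negative, so $\|u(t)\|_{H^1(\T)}^2\le M(u_0)+E(u_0)$ for all $t$, ruling out finite–time blow–up and allowing the local solution to be iterated on any $[0,T]$. The main obstacle throughout is the singularity of the nonlinearity: since $\delta\notin H^{-1}(\T)$ (indeed $\delta\in H^{s}(\T)$ only for $s<-\tfrac12$), the source $\delta|u|^2u$ never lies in $H^{-1}$ and no naive $H^1$ energy method closes the equation. Everything hinges on (i) extracting uniform bounds and compactness strong enough to pass to the limit in the \emph{point value} $|u(0)|^2u(0)$, and (ii) the reduction to the scalar charge equation, the one setting in which the trace can be controlled on its own terms.
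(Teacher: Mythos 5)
Your existence argument (smooth the Dirac mass, extract uniform $H^1$ bounds from the conserved or dissipated mass and energy, apply Aubin--Lions to get strong convergence in $C([0,T];H^s)$ for $s\in(\tfrac12,1)$, and pass to the limit in the point value $u^\eps(0,t)$) is essentially the route the paper takes. Two caveats there: weak $H^1$ convergence only gives $E(u(t))\le E(u_0)$ by lower semicontinuity of $\|\nabla u\|_{L^2}$ (and the added Ginzburg--Landau dissipation only pushes in the same direction), so ``passing to the limit in the dissipation identities'' does not by itself give conservation; the paper recovers equality by a time-reversal argument, and then uses energy conservation together with Radon--Riesz to upgrade $C([0,T];H^1_w)$ to $C([0,T];H^1)$. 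These steps are missing from your sketch but are repairable.

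The genuine gap is in your uniqueness argument. You claim that the kernel $K(\tau)=\sum_{n\in\Z}\e^{-\ii n^2\tau}$ satisfies $|K(\tau)|\lesssim\tau^{-1/2}$ ``by Poisson summation'', hence is an integrable Abel-type kernel. This is false on the torus: Poisson summation gives $K(\tau)=\sqrt{\pi/(\ii\tau)}\sum_{n}\e^{\ii n^2/(4\tau)}$, and the remaining theta sum is itself a divergent, distributional object rather than an $O(1)$ factor. Writing $\tau=2\pi p/q+\sigma$, one finds Gauss-sum-weighted singularities of size roughly $q^{-1/2}|\sigma|^{-1/2}$ at every rational multiple of $2\pi$, so $K$ has dense singular support, satisfies no pointwise bound $C\tau^{-1/2}$, and is only a distribution of temporal regularity $H^{-\frac14-\eps}_{\mathrm{loc}}$. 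This is precisely the obstruction the paper identifies as the reason the free-space Volterra arguments of Adami--Teta do not transfer to $\T$, and your Gronwall-with-weakly-singular-kernel step does not close as stated. The reduction to the charge $q(t)=u(0,t)$ is nevertheless the right idea; the estimate that replaces the pointwise kernel bound is the duality estimate $\bigl|\int_0^t S^\delta(t-t')f(t')\,\dd t'\bigr|\le\|\Id_{[0,\tau]}\|_{H^{3/8}(\R)}\,\|\chi S^\delta\|_{H^{-3/8}(\R)}\,\|f\|_{L^\infty_{[0,\tau]}}$, which works because the temporal Fourier coefficients of $S^\delta$ are supported on perfect squares (giving $\chi S^\delta\in H^{-3/8}$) and because $\|\Id_{[0,\tau]}\|_{H^{3/8}}\to0$ as $\tau\to0$; this is what produces the contraction and hence uniqueness. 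The paper offers two implementations of this: the direct fixed-point argument of Section \ref{Section:LWP}, and an inviscid limit of a concentrated complex Ginzburg--Landau equation whose kernels $S^\delta_\gamma$ are shown to converge to $S^\delta$ in $H^{-s}(\R)$, $s=\tfrac12^-$, uniformly in $\gamma$ --- both avoid any pointwise decay of $K$.
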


\medskip

\begin{remark}
In the statement of Theorem \ref{existence_uniqueness_theorem}, and indeed throughout the paper, when we say a solution, we mean a mild solution. In other words, $u$ satisfies the integral equation
\begin{equation*}
u(t) = S(t)u_0 - \ii\int_0^t S(t-t') \delta |u|^2u \, \dd t' \,.
\end{equation*}
We direct the reader to \eqref{Schro_kernel_defn} for the definition of the Schr\"odinger kernel $S(t)$.
\end{remark}

\begin{remark}
We make the following remarks about Theorem \ref{existence_uniqueness_theorem}. 
\begin{enumerate}
\item For convenience, we place the $\delta$ function at the origin, but our analysis readily extends to a $\delta$ function located at any other point on $\T$. Furthermore, it easily generalizes to any finite linear combination of $\delta$ functions at arbitrary points on $\T$, each with a positive real coefficient.
\item Our results are stated for the cubic \eqref{concentrated_NLS_introduction}, but the analysis easily carries over to any power like nonlinearity of the form $\delta|u|^{2p}u$, since we work above the endpoint of the Sobolev embedding theorem.
\item The choice of a defocusing nonlinearity is based on not wanting to consider the case of finite time blow up in any of the approximations to the \eqref{concentrated_NLS_introduction} which we will consider throughout the paper. An interesting future direction of research would be to consider the problem of blow-up solutions in the setting of the \eqref{concentrated_NLS_introduction} with a focusing nonlinearity given by $-\delta |u|^{2p}u$. We leave this to future work.
\end{enumerate}
\end{remark}

\begin{remark}
Let us briefly remark on the difficulty of working on the torus $\T$ versus working in free space $\R$. When working in free space, a central object of study is the following integral kernel
\begin{equation*}
S^{\delta}_{\R}(t) = \frac{1}{\sqrt{-\ii t}} \e^{\frac{\ii a^2}{t}}.
\end{equation*}
The $\sqrt{t}$ means that the kernel has smoothing properties when convolving in time, similarly to the standard Strichartz estimates. These smoothing properties were proved in \cite[Lemma 3]{AT01}. One also computes that this kernel lies in $H^{-\eps}_{\mathrm{loc}}(\R)$ for any $\eps > 0$. However, when working on the torus, one needs to analyse the following kernel
\begin{equation}
\label{S_delta_definition_introduction}
S^{\delta}(t) = \sum_{n \in \Z} \e^{-\ii n^2 t}.
\end{equation}
This no longer has the smoothing properties of the kernel in $\R$, so the Volterra integral equation arguments from \cite{AT01} do not easily generalise. Further, as a distribution one computes that it lies in $H^{-\frac{1}{4}-\eps}_{\mathrm{\mathrm{loc}}}(\R)$. Moreover, formally applying the Poisson summation formula to the periodic kernel, one computes 
\begin{equation*}
S^{\delta}(t) = \sqrt{\frac{\pi}{\ii t}} \sum_{n \in \Z} \e^{\frac{\ii n^2}{4 t}}.
\end{equation*}
Up to rescaling, we can interpret this problem as analogous to the case on $\R$ with a $\delta$ function placed at each integer point --- forming a lattice of impurities. Similarly, on the torus, solutions of \eqref{concentrated_NLS_introduction} are forced to repeatedly interact with the impurity as they wrap around the torus.
\end{remark}

We also prove the following result about local solutions for initial conditions in $H^s(\T)$, $s \in (\frac{1}{2},1)$.
\begin{theorem}[Existence of unique mass-conserving solutions below $H^1$]
\label{LWP_theorem}
Suppose that $u_0 \in H^{s}(\T)$, for $s \in (\frac{1}{2},1)$. Then there is some $T = T(\|u_0\|_{H^s(\T)}) > 0$ and a unique function $u \in C([0,T];H^s(\T))$ satisfying
\begin{equation*}
u(t) = S(t)u_0 - \ii\int_0^t S(t-t') \delta |u|^2u(t') \, \dd t' \,.
\end{equation*}
Moreover, one has
\begin{equation*}
M(u(t)) = M(u_0),
\end{equation*}
and the solution depends continuously on the initial condition $u_0$ in ${C}([0,T];H^s(\T))$ norm.
\end{theorem}

\begin{remark}
Let us remark that the proof of Theorem \ref{LWP_theorem} is independent of the sign of the nonlinearity, so also holds for the focusing problem. By standard Volterra integral equation theory, there is a blow-up criterion, see Remark \ref{blow-up_remark}.
\end{remark}

\begin{remark}
\label{higher_dimensional_remark}
Let us remark that the problem in two and three dimensions is significantly more complicated. In one dimension, the domain of the self-adjoint extension of the Hamiltonian used to rigorously construct \eqref{concentrated_NLS_introduction} is given by $H^1(\R)$. However, in two and three dimensions, it is a space which is strictly larger than $H^1(\R^d)$. We direct the reader to for example the introduction of the review paper \cite{Ten23} for full details about the difference between one, two, and three dimensions, and for a precise statement of the correct range of initial conditions to consider. Since we focus on the one dimensional problem in this paper, we do not comment further on the higher dimensional cases in this paper.
\end{remark}

\subsection{Method of Proof}
The problems of well-posedness in the energy and below the energy spaces are considered separately. In both cases, we show that there is a unique solution to the charge equation
\begin{equation}
\label{charge_equation_introduction}
q(t) = S(t) u_0(0) - \ii \int_0^t S^\delta(t-t') |q|^2q(t') \, \dd t',
\end{equation}
where we recall \eqref{S_delta_definition_introduction}. This is done by analysing the kernel $S^\delta$ and proving the existence of a fixed point. In both the local and global cases, this suffices to prove the existence of solutions.

However, to prove uniqueness of solutions, we need to prove that we have persistence of regularity (or sufficient regularity to define the product of $|u|^2u$ with $\delta$). Otherwise, there could be a solution which is not completely determined by its value at zero.

For initial data in the energy space, we proceed via compactness methods. In particular, we consider the following equation
\begin{equation}
\label{smoothed_NLS_introduction}
\left\{\begin{alignedat}{2}
\ii \partial_t u^\eps + \Delta u^\eps &=  V^{\eps} |u^\eps|^{2}u^\eps, \\
u^\eps(x,0) &= u_0(x) \in H^1(\T) \, ,
\end{alignedat}\right.
\tag{sNLS}
\end{equation}
where $V^{\eps}\rightharpoonup\delta$ as $\eps\to0$.
Since the initial data is in the energy space, and the problem is defocusing, we can obtain global boundedness on the $H^1(\T)$ of $u^\eps(t)$. In particular, we then use a compactness argument to show that there must be a solution in $C_t([0,T]) H^1_x(\T)$. Since such a solution is determined by its value at zero, we have uniqueness.

For initial conditions in $H^s(\T)$, $s \in (\frac{1}{2},1)$, we no longer have uniform bounds for solutions of \eqref{smoothed_NLS_introduction}. We thus adapt an argument from \cite{AT01} to show that the $H^s(\T)$ norm of $u(t)$ is controlled by the Sobolev norm of $q$. Conservation of mass is proved via a density argument and using the conservation of mass for initial data in $H^1(\T)$.

\subsection{Previously Known Results}
\label{Section:previous}

We briefly review the previously known results in the setting $X = \R$. For $X = \R$, the solution theory of \eqref{concentrated_NLS_introduction} was first considered in \cite{AT01}, where the authors showed the existence and uniqueness of solutions for initial conditions in $H^{s}(\R)$ for $s > \frac{1}{2}$. They also prove the conservation of mass and energy for sufficiently regular initial conditions, and they show that for extremely attractive potentials, one can have blow up of solutions. These results were achieved by treating the problem as a Volterra integral equation, and analysing the time smoothing properties of the Schrödinger kernel in this setting. An independent proof of well-posedness in the energy space based on the dispersive estimates from the Schrödinger kernel was given in \cite{KK07}. Bound states and the orbital stability of the \eqref{concentrated_NLS_introduction} were considered in \cite{BD21}.

It was also shown in \cite{CFNT14} that solutions of the one dimensional \eqref{concentrated_NLS_introduction} can be constructed as the limit of the smoothed NLS as the inhomogeneity converges to the $\delta$ function. This result was central to the first author's work with Adami \cite{AL25}, in which they derived the \eqref{concentrated_NLS_introduction} as the miscroscopic limit of a many-body Schrödinger equation. 

In \cite{HGKV24} the authors consider the solution theory of an NLS with more singular interactions than the $\delta$ function on $X=\R$. In particular, the authors prove the existence and uniqueness of solutions to the equation with $u_0 \in H^1$. They recently extended this to well-posedness and scattering for initial conditions in $L^2(\R)$ in \cite{HGKV25}, and they obtained the concentrated limit for such initial conditions.

In $X = \R^2$ and $\R^3$, the well-posedness of the \eqref{concentrated_NLS_introduction} was proved in \cite{CCT19} and \cite{ADFT03} respectively. Further results in two and three dimensions can be found in \cite{ACCT20, ACCT21} and \cite{ABCT22,DTT24} respectively. For a full summary of the known results for the \eqref{concentrated_NLS_introduction} in $\R,\R^2,$ and $\R^3$, we direct the reader to the review \cite{Ten23}, and the references within. The well-posedness of \eqref{concentrated_NLS_introduction} on the half-line was also considered in \cite{HKO25,HOS25}.

This is, as far as the authors are aware, the first result on the existence and uniqueness of solutions for the \eqref{concentrated_NLS_introduction} on a periodic domain. We mention the work \cite{FT00}, which analyses the self-adjoint extensions for the linear problem, \eqref{linear_delta}, on $\T$.

\subsection{Outline of the Paper}
We briefly outline the structure of the paper. In Section \ref{Section:background}, we fix notation and recall background results. Section \ref{Section:sNLS} establishes the existence of energy-conserving solutions for the smoothed NLS. We then show uniqueness by proving that there is a unique solution to the charge equation \eqref{charge_equation_introduction}. In Section \ref{Section:LWP}, we consider the problem of local well-posedness below the energy space. Section \ref{Section:open} briefly mentions some problems left open by our work. Finally, Appendix \ref{SEC:proposition} contains a proof of a technical proposition needed in our proof.

\section{Notation and Preliminaries}
\label{Section:background}

\subsection{Notation}
Throughout this paper, we will denote by $C > 0$ a positive constant which can change line-to-line. 
To indicate that $C$ depends on the parameters ${x_1, \ldots, x_n}$, we write $C = C(x_1, \ldots, x_n)$. 
We will write $a \lesssim b$ for $a \leq C b$, and if $C(x_1,\ldots, x_p)$, we write $a \lesssim_{x_1,\ldots,x_p} b$. We denote $a\sim b$ if $a\lesssim b$ and $b\lesssim a$.

We denote by $\Id_A$ the indicator function $\Id_A: \R \to \{0,1\}$ defined by
\begin{equation*}
\Id_A(t) := 
\begin{cases}
    1 \quad \text{ if } t \in A \,, \\
    0 \quad \text{ if } t \notin A \,.
\end{cases}
\end{equation*}
We use the notation $c^+$ and $c^-$ to represent $c + \eta$ and $c - \eta$, respectively, for a small constant $\eta > 0$. We also adopt the notation of $u(t) := u(x,t)$, and when considering the function at $0$ in space, we will write $u(0,t)$. 

\subsubsection*{Fourier transforms and Sobolev spaces}
For a function $f \in L^1(\mathbb{T})$, we will denote its {\it Fourier coefficients} by
\begin{equation*}
\hat{f}(n) \sim \int_{\mathbb{T}} f(x) \e^{- \ii n x} \, \dd x \,,
\end{equation*}
where $n \in \Z$. Where clear, we will omit the domain of integration. For $s\in \R$, we also define the $H^s(\T)$ norm by
\begin{equation*}
\|f\|_{H^{s}_x(\T)}^2 \sim \sum_{n \in \Z} (1+n^2)^{s} |\hat{f}(n)|^2.
\end{equation*}
We will also write
\[
\hat{f}(\eta) \sim \int_{-\infty}^\infty f(t)\e^{- \ii t\eta}\,\dd t
\]
for the {\it Fourier transform in time}. It will be clear from context whether we are performing Fourier transforms in time or space. Moreover, we define the Sobolev norm of $F: \R \to \R$ as
\begin{equation*}
\|F\|_{H^{b}_t(\R)} := \|(1+\eta^2)^\frac{b}{2}\hat{f}(\eta)\|_{L^2_{\eta}} \,.
\end{equation*}

\subsubsection*{Schrödinger kernel}
The {\it periodic Schrödinger semigroup} acts on $f$ via
\begin{equation}
\label{Schro_kernel_defn}
S(t)f(x) \sim \sum_{n \in \mathbb{Z}} \hat{f}(n) \e^{-\ii n^2 t}\e^{\ii n x} 
\end{equation}

\subsection{Preliminary Results}
We first record the following lemma about the regularity of the characteristic function.
\begin{lemma}
\label{regularity_cut-off_lemma}
Suppose $I \subset \R$ is a compact interval. Then $\Id_I \in H^{s}_t(\R)$ for any choice of $0< s < \frac{1}{2}$. Moreover $\|\Id_I\|_{H^{s}} \to 0$ as $|I| \to 0$.
\end{lemma}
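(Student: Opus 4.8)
The plan is to work entirely on the Fourier side, since the $H^s_t(\R)$ norm is defined as a weighted $L^2$ norm of the Fourier transform. Writing $I = [a,b]$ and $L = |I| = b - a$, I would first compute
\[
\widehat{\Id_I}(\eta) = \int_a^b \e^{-\ii t\eta}\,\dd t = \frac{\e^{-\ii a\eta} - \e^{-\ii b\eta}}{\ii\eta},
\]
whence $|\widehat{\Id_I}(\eta)| = 2|\sin(L\eta/2)|/|\eta|$. In particular the modulus depends only on the length $L$ and not on the location of $I$, consistent with the translation invariance of the $H^s_t$ norm. The whole problem thus reduces to analysing the single integral
\[
\|\Id_I\|_{H^s_t}^2 \sim \int_{\R} (1+\eta^2)^s\,\frac{4\sin^2(L\eta/2)}{\eta^2}\,\dd\eta.
\]

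To show this is finite for $0 < s < \tfrac12$, I would split the integral into the regions $\{|\eta| \le 1\}$ and $\{|\eta| > 1\}$. On $\{|\eta|\le1\}$ the weight $(1+\eta^2)^s$ is bounded, and the estimate $\sin^2(L\eta/2) \le (L\eta/2)^2$ cancels the apparent $\eta^{-2}$ singularity, leaving an integrand bounded by $L^2/4$; this is where integrability near the origin comes from. On $\{|\eta|>1\}$ one has $(1+\eta^2)^s \lesssim \eta^{2s}$ and $\sin^2(L\eta/2) \le 1$, so the integrand is $\lesssim \eta^{2s-2}$, which is integrable at infinity precisely because $2s - 2 < -1$. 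This is the only place the restriction $s < \tfrac12$ is used, and it identifies $s = \tfrac12$ as the sharp threshold.

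For the vanishing of the norm as $L \to 0$, I would keep the same splitting and track the dependence on $L$. The contribution from $\{|\eta|\le1\}$ is bounded by $\tfrac14 L^2 \int_{|\eta|\le1}(1+\eta^2)^s\,\dd\eta \lesssim L^2$. For $\{|\eta|>1\}$ I would rescale via $v = L\eta$, turning the integral into $L^{1-2s}\int_{|v|>L}\sin^2(v/2)\,|v|^{2s-2}\,\dd v$; since $\int_{\R}\sin^2(v/2)\,|v|^{2s-2}\,\dd v$ converges (the $\sin^2$ factor controls $v=0$ while $2s-2<-1$ controls infinity), this contribution is $\lesssim L^{1-2s}$. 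Hence $\|\Id_I\|_{H^s_t}^2 \lesssim L^2 + L^{1-2s} \to 0$ as $L = |I| \to 0$, which in fact yields the quantitative rate $\|\Id_I\|_{H^s_t} \lesssim |I|^{(1-2s)/2}$. As an alternative to the rescaling one could invoke dominated convergence, using $\min\!\big(1,(L\eta/2)^2\big)$ to build an $L$-independent integrable majorant; the rescaling is marginally cleaner since it produces the explicit rate.

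The computations here are elementary, so there is no single hard step; the main thing to get right is the interplay between the two bounds on $\sin^2(L\eta/2)$, since using $\sin^2 \le 1$ everywhere destroys integrability at the origin while using $\sin^2 \le (L\eta/2)^2$ everywhere destroys it at infinity. Being careful to split the domain and match the correct bound to each regime is what makes both the membership $\Id_I \in H^s_t$ and the decay $\|\Id_I\|_{H^s_t} \to 0$ work, with $s < \tfrac12$ entering only through the large-frequency tail.
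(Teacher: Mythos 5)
Your proof is correct, and it takes a genuinely different route from the paper. The paper invokes the Gagliardo--Slobodecki\u{\i} characterization of $H^s(\R)$ for $s\in(0,1)$, writing $\|\Id_I\|_{H^s}^2 \sim \|\Id_I\|_{L^2}^2 + \iint |\Id_I(x)-\Id_I(y)|^2|x-y|^{-1-2s}\,\dd y\,\dd x$, and evaluates the double integral exactly as $\tfrac{2a^{1-2s}}{s(1-2s)}$; the constraint $s<\tfrac12$ enters through the convergence of $\int_a^\infty |y-x|^{-1-2s}\,\dd y$ near $y=x$, i.e.\ through the jump discontinuities of $\Id_I$. You instead work directly with the Fourier-side definition of $\|\cdot\|_{H^s_t(\R)}$ that the paper actually uses (which arguably makes your argument the more self-contained one, since the paper's proof tacitly relies on the equivalence of the two norms), computing $|\widehat{\Id_I}(\eta)| = 2|\sin(L\eta/2)|/|\eta|$ and splitting frequencies at $|\eta|=1$; there $s<\tfrac12$ enters through the integrability of $\eta^{2s-2}$ at infinity, which is the Fourier-dual manifestation of the same obstruction. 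Both arguments are elementary and both yield the same quantitative rate $\|\Id_I\|_{H^s}\lesssim |I|^{(1-2s)/2}$ for small $|I|$ (the paper's displayed $\|\Id_I\|_{L^2}^2=a^2$ should read $a$, but this does not affect its conclusion). Your rescaling $v=L\eta$ to extract the factor $L^{1-2s}$ is clean and correct, and your closing remark about matching the bound $\sin^2\le (L\eta/2)^2$ to low frequencies and $\sin^2\le 1$ to high frequencies identifies exactly where the argument would break if done carelessly. No gaps.
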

\begin{proof}
Let $0<s<\tfrac12$ and, without loss of generality, set $I=[0,a]$ with $a>0$. Using the Gagliardo--Slobodecki\u{\i}--Sobolev characterization of $H^s(\R)$ for $s\in(0,1)$,
\[
\|\Id_I\|_{H^{s}(\R)}^{2} \sim \|\Id_I\|_{L^2(\R)}^{2}
  + \int_{\R}\!\int_{\R}\frac{|\Id_I(x)-\Id_I(y)|^{2}}{|x-y|^{1+2s}}\,\dd y\,\dd x\,.
\]
Clearly $\|\Id_I\|_{L^2(\R)}^{2}=a\to0$ as $a\to0$. For the double integral we note that
$|\Id_I(x)-\Id_I(y)|=1$ if and only if exactly one of $x,y$ lies in $I$. Hence
\[
\int_{\R}\!\int_{\R}\frac{|\Id_I(x)-\Id_I(y)|^{2}}{|x-y|^{1+2s}}\,\dd y\,\dd x
=2\int_0^a\!\left(\int_{(-\infty,0]\cup[a,\infty)}\frac{\dd y}{|y-x|^{1+2s}}\right)\dd x\,.
\]
By elementary integration, since $0 < s < \frac{1}{2}$, we obtain
\[
\int_{\R}\!\int_{\R}\frac{|\Id_I(x)-\Id_I(y)|^{2}}{|x-y|^{1+2s}}\,\dd y\,\dd x = \frac{2\,a^{1-2s}}{s(1-2s)}\,.
\]
This converges to $0$ as $a\to0$. So
$\|\Id_I\|_{H^{s}(\R)}\to0$ as $|I|=a\to0$.
\end{proof}
We also have the following result about the stability of multiplication by indicator functions in $H^{\sigma}(\R)$, see \cite[Lemma 5]{AT01}.
\begin{lemma}
\label{multiplication_indicator_lemma}
Let $\sigma \in [0,\frac{1}{2})$.
Suppose that $T > 0$ and that $f \in H^{\sigma}((0,T)) \cap C([0,T])$. Then
\begin{equation*}
\Id_{[0,T]}f \in H^{\sigma}(\R).
\end{equation*}
\end{lemma}

\subsubsection*{Compactness Results}
We record the following theorems, which give us compact embeddings of various function spaces. First we have the following version of the Rellich--Kondrachov theorem for compact manifolds without boundary, see for example \cite[Proposition 3.4]{Tay11}.
\begin{proposition}[Rellich--Kondrachov for compact manifolds]
Suppose that $M$ is a compact manifold, and let $s \in \R$. Suppose that $\sigma > 0$. The natural inclusion map
\begin{equation*}
j: H^{s + \sigma}(M) \hookrightarrow H^{s}(M)
\end{equation*}
is a compact embedding.
\end{proposition}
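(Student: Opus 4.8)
The plan is to reduce the statement to a tail estimate on spectral coefficients, mirroring exactly the situation on the torus, where the norm is $\|f\|_{H^s_x(\T)}^2 \sim \sum_n (1+n^2)^s |\hat{f}(n)|^2$. Since $M$ is a compact manifold without boundary, I would first invoke the spectral theory of the Laplace--Beltrami operator $-\Delta_g$: it is essentially self-adjoint and non-negative on $L^2(M)$, and --- crucially using the compactness of $M$ --- it has compact resolvent, hence a discrete spectrum $0 = \lambda_0 \le \lambda_1 \le \cdots \to \infty$ together with an orthonormal basis of smooth eigenfunctions $\{\phi_k\}_{k\ge 0}$. Writing $c_k(f) := \langle f, \phi_k\rangle_{L^2}$, one then has the spectral characterisation
\[
\|f\|_{H^s(M)}^2 \sim \sum_{k\ge 0}(1+\lambda_k)^s |c_k(f)|^2,
\]
valid for every $s\in\R$, which plays the role of the Fourier-series definition used on $\T$.

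With this characterisation in hand, I would argue as follows. Let $(f_j)_j$ be a bounded sequence in $H^{s+\sigma}(M)$, say $\|f_j\|_{H^{s+\sigma}} \le A$ for all $j$. For each fixed $k$ the scalars $c_k(f_j)$ are bounded in $j$, since $(1+\lambda_k)^{s+\sigma}|c_k(f_j)|^2 \le A^2$, so a standard diagonal extraction yields a subsequence, not relabelled, along which $c_k(f_j) \to c_k$ as $j\to\infty$ for every $k$. To upgrade this mode-by-mode convergence to convergence in $H^s$, I would split the norm at a cutoff $N$. The tail is controlled uniformly in $j$ by the gain $\sigma>0$:
\[
\sum_{\lambda_k > N}(1+\lambda_k)^s |c_k(f_j)|^2 \le (1+N)^{-\sigma}\sum_{\lambda_k>N}(1+\lambda_k)^{s+\sigma}|c_k(f_j)|^2 \le (1+N)^{-\sigma} A^2,
\]
which tends to $0$ as $N\to\infty$ because $\lambda_k\to\infty$. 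Meanwhile, for fixed $N$ the low-mode sum involves only finitely many indices, on which $c_k(f_j)\to c_k$; hence $(f_j)$ is Cauchy in $H^s(M)$ and converges to $f := \sum_k c_k\phi_k \in H^s(M)$. This gives compactness of the embedding $j$.

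The tail estimate is elementary, so the genuine content sits in the two spectral inputs. The main obstacle is establishing the discreteness of the spectrum with $\lambda_k\to\infty$ --- precisely where the compactness of $M$ is used, and the analogue on $M$ of the fact that the torus has no continuous spectrum --- together with the equivalence of the spectral norm above with the intrinsic definition of $H^s(M)$ (built from local charts and a subordinate partition of unity) for all real $s$, including negative $s$, where one argues by duality. Both rely on elliptic regularity for $-\Delta_g$; once they are granted, compactness of the embedding follows immediately from the argument above. For a domain with boundary one would instead reduce to a cube and use periodic extension together with the Fourier tail estimate directly, but on a closed manifold the spectral route is the most economical.
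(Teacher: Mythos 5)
Your argument is correct. Note, however, that the paper does not prove this proposition at all: it is quoted as a background result with a citation to \cite[Proposition 3.4]{Tay11}, so there is no in-paper proof to compare against. Your spectral route --- discreteness of the spectrum of $-\Delta_g$ on a closed manifold, the characterisation $\|f\|_{H^s(M)}^2 \sim \sum_k (1+\lambda_k)^s |c_k(f)|^2$ for all $s \in \R$, diagonal extraction in the modes, and the uniform tail bound $(1+N)^{-\sigma}A^2$ coming from the gain $\sigma>0$ --- is the standard proof and is essentially what the cited reference does. The only caveat is the one you flag yourself: the equivalence of the spectral norm with the charts-and-partition-of-unity definition of $H^s(M)$ for negative $s$ is where the real work (elliptic regularity plus duality) is hidden, and in a fully self-contained write-up that step would need to be either proved or explicitly cited. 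Granting it, the Cauchy argument in $H^s$ is complete and correct.
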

We will also use the following version of the Aubin--Lions--Simon lemma, see \cite{Sim87}.

\begin{proposition}[Aubin--Lions--Simon]
Suppose $X,X_0,X_1$ are Banach with $X_0 \subset X \subset X_1$ and suppose that $X_0$ is compactly embedded in $X$ and $X$ is continuously embedded in $X_1$. Suppose $q \in (1,\infty]$. Let
\[
W:= \{u \in L^\infty([0,T];X_0) : \partial_t u \in L^q([0,T];X_1)\}\,.
\]
Then $W$ compactly embeds into $C([0,T];X)$.
\end{proposition}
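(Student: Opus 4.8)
The plan is to reduce the statement to two classical tools: Ehrling's interpolation inequality, which converts the qualitative hypotheses (``$X_0$ compactly embedded in $X$, $X$ continuously embedded in $X_1$'') into a quantitative estimate, and the Arzelà--Ascoli theorem. First I would establish \emph{Ehrling's lemma}: for every $\eta > 0$ there is a constant $C_\eta > 0$ such that
\[
\|v\|_X \leq \eta \|v\|_{X_0} + C_\eta \|v\|_{X_1} \qquad \text{for all } v \in X_0 \,.
\]
This is proved by contradiction: if it failed for some $\eta_0$, one extracts a sequence $v_k$ with $\|v_k\|_{X_0} = 1$ and $\|v_k\|_X > \eta_0 + k\|v_k\|_{X_1}$; the compactness of $X_0 \hookrightarrow X$ yields an $X$-convergent subsequence, while the inequality forces $\|v_k\|_{X_1} \to 0$, so the $X$-limit must be $0$, contradicting $\|v_k\|_X > \eta_0$.

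Next, let $(u_n)$ be a bounded sequence in $W$, so that $\sup_n \|u_n\|_{L^\infty_t X_0} \leq M$ and $\sup_n \|\partial_t u_n\|_{L^q_t X_1} \leq M$. Passing to the representative that is absolutely continuous into $X_1$, the fundamental theorem of calculus together with Hölder's inequality gives equicontinuity in $X_1$:
\[
\|u_n(t) - u_n(s)\|_{X_1} \leq \int_s^t \|\partial_t u_n(\tau)\|_{X_1}\,\dd\tau \leq |t-s|^{1-\frac{1}{q}}\, M \,,
\]
where the strict inequality $q > 1$ is precisely what makes the Hölder exponent $1 - \frac{1}{q}$ positive. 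Moreover, for each fixed $t$ the set $\{u_n(t)\}_n$ is bounded in $X_0$, hence precompact in $X$ and a fortiori in $X_1$ by the compact embedding. By Arzelà--Ascoli, a subsequence (not relabelled) converges in $C([0,T]; X_1)$.

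I would then upgrade this convergence from $X_1$ to $X$ by applying Ehrling's lemma pointwise in $t$ to the differences $u_n - u_m$ and taking the supremum over $t$:
\[
\|u_n - u_m\|_{C_t X} \leq \eta \,\|u_n - u_m\|_{C_t X_0} + C_\eta \,\|u_n - u_m\|_{C_t X_1} \leq 2M\eta + C_\eta \,\|u_n - u_m\|_{C_t X_1} \,.
\]
Since the extracted subsequence is Cauchy in $C([0,T]; X_1)$, letting $n,m \to \infty$ and then $\eta \to 0$ shows it is Cauchy, hence convergent, in $C([0,T]; X)$. Thus every bounded sequence in $W$ has a subsequence converging in $C([0,T]; X)$, which is exactly the asserted compactness of the embedding.

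The main obstacle is not any individual estimate but the measure-theoretic bookkeeping. The bound $\|u_n(t)\|_{X_0} \leq M$ holds only for almost every $t$, whereas both the Arzelà--Ascoli step and the final Ehrling estimate require it for the continuous-in-$X_1$ representative at \emph{every} $t$. Reconciling these amounts to showing the $X_0$-bound passes to the continuous representative, i.e. that an $X_0$-bounded net converging in $X_1$ has its limit in $X_0$ with the same bound; this is automatic when $X_0$ is reflexive (as in the Hilbert-space applications used in this paper), via weak compactness of the unit ball and weak lower semicontinuity of the norm, together with uniqueness of weak limits since $X_0 \hookrightarrow X_1$. One must also fix a single full-measure set of times valid simultaneously for the countable collection of pairs $(u_n, u_m)$, which is harmless since a countable intersection of full-measure sets is full-measure.
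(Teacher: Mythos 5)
Your argument is correct, but note that the paper does not actually prove this proposition: it is quoted as a known result with a citation to Simon's paper \cite{Sim87}, so there is no in-paper proof to compare against. What you have written is the classical direct proof of the $C([0,T];X)$ endpoint case --- Ehrling's interpolation inequality to quantify the compact embedding, the fundamental theorem of calculus plus H\"older (using $q>1$, with exponent $1-\tfrac1q=1$ when $q=\infty$) for equicontinuity in $X_1$, Arzel\`a--Ascoli in $C([0,T];X_1)$, and Ehrling again to upgrade the Cauchy property to $C([0,T];X)$. This is essentially Simon's own route for this case (his Corollary 4 reduces to exactly such a lemma), whereas the bulk of \cite{Sim87} is devoted to the more general $L^p(0,T;B)$ criteria via time-translation estimates, which you do not need here. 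Your self-contained proof buys transparency at the cost of the one genuine subtlety, which you correctly identify: the $L^\infty_t X_0$ bound holds only almost everywhere, while Arzel\`a--Ascoli and the final Ehrling step are applied to the $X_1$-continuous representative at every $t$. Your fix via reflexivity of $X_0$ is adequate for the Sobolev-space application in this paper, but it is worth noting that reflexivity can be avoided entirely: Arzel\`a--Ascoli only requires pointwise precompactness on a dense set of times, so one may run the whole argument on the (dense, full-measure) set where the $X_0$ bound holds and extend to all $t$ by the $X_1$-equicontinuity and the resulting uniform Cauchy property in $X$. With that remark the proof is complete as stated.
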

We also recall that since we work in one dimension, whenever $s > \frac{1}{2}$, we have the embedding $H^s_x \hookrightarrow L^\infty_x$, and $H^s_x$ is a Banach algebra. Moreover, for $s \in [0,\frac{1}{2}]$, we have the estimate
\begin{equation}
\label{Hs_bound}
\|fg\|_{H^s_x} \lesssim \|f\|_{L^\infty_x}\|g\|_{H^s_x} + \|f\|_{H^s_x}\|g\|_{L^\infty_x}\,.
\end{equation}

\section{Construction of Energy Conserving Solutions}
\label{Section:sNLS}

\subsection{Uniform Bounds on the Smoothed NLS}
We construct the solutions as weak limits of the following smoothed NLS
\begin{equation}
\left\{\begin{alignedat}{2}
\ii \partial_t u^\eps + \Delta u^\eps &=  V^{\eps} |u^\eps|^{2}u^\eps, \\
u^\eps(x,0) &= u_0(x) \in H^1(X)\,.
\end{alignedat}\right.
\tag{sNLS}
\end{equation}
where
{ $V^\eps$
\[
V^{\eps}(x)=\sum_{k\in \Z} \frac{1}{\eps} V\left(\frac{x+2\pi k}{\eps}\right)
\]
with $V\in C^\infty(\T)$ such that $\int_\T V \dd x = 1$. The $V^\eps \to \delta$ in $H^{-s}(\T)$ for any $s > \frac{1}2{}$}. We have the following well-posedness result for \eqref{smoothed_NLS_introduction}.
\begin{proposition}
\label{smoothed_well_posedness_proposition}
Suppose $u_0 \in H^1$. Then for each fixed $\eps \in (0,1)$, there is a unique global solution $C(\R;H^1(\T))$ to \eqref{smoothed_NLS_introduction}. Moreover, we have
\begin{align*}
M(u^\eps(t)) &= \int |u^\eps(x,t)|^2 \, \dd x = M(u_0)\,, \\
E^\eps(u^\eps(t)) &= \int |\nabla u^\eps(x,t)|^2 \, \dd x + \frac{1}{2} \int V^{\eps}(x)|u^\eps(x,t)|^{4} \, \dd x = E^\eps(u_0)\,.
\end{align*}
Finally, we have the uniform bounds
\begin{align}
\label{uniformly_H1_bound}
\sup_{\eps \in (0,1)} \|u^\eps\|_{L^\infty_t(\R)H^1_x(\T)} &\leq C(\|u_0\|_{H^1})\,, \\
\label{uniformly_H-1_bound}
\sup_{\eps \in (0,1)} \|\partial_t u^\eps\|_{L^\infty_t(\R) H_x^{-1}(\T)} &\leq C(\|u_0\|_{H^1})\,.
\end{align}
\end{proposition}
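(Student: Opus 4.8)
The plan is to treat \eqref{smoothed_NLS_introduction} for fixed $\eps$ as a standard semilinear Schrödinger equation and to track carefully which estimates are uniform in $\eps$. First I would establish local well-posedness by a contraction mapping argument applied to the Duhamel formulation
\[
u^\eps(t) = S(t)u_0 - \ii\int_0^t S(t-t')\,V^\eps|u^\eps|^2u^\eps(t')\,\dd t'.
\]
Since $S(t)$ is unitary on each $H^s_x(\T)$ and, for fixed $\eps$, $V^\eps\in C_c^\infty(\R)\subset H^1_x$, the map $v\mapsto V^\eps|v|^2v$ is locally Lipschitz from $H^1_x$ to itself: this follows from \eqref{Hs_bound} and the fact that in one dimension $H^1_x(\T)$ is a Banach algebra embedding into $L^\infty_x$. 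A standard fixed point argument then gives a unique solution in $C([0,T_\eps];H^1_x)$, where the existence time $T_\eps$ depends on $\|u_0\|_{H^1}$ and on $\eps$ through $\|V^\eps\|_{H^1_x}$.

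Next I would prove the conservation laws. Pairing the equation with $\bar u^\eps$ and taking imaginary parts annihilates both the Laplacian term (real after integration by parts) and the nonlinear term $V^\eps|u^\eps|^4$ (real), giving $\frac{\dd}{\dd t}M(u^\eps)=0$. Pairing instead with $\partial_t\bar u^\eps$ and taking real parts yields $\frac{\dd}{\dd t}E^\eps(u^\eps)=0$, since $-\Delta u^\eps+V^\eps|u^\eps|^2u^\eps=\ii\partial_t u^\eps$ forces the integrand to collapse to $2\Real\bigl(-\ii\int|\partial_t u^\eps|^2\,\dd x\bigr)=0$. These formal computations I would make rigorous by persistence of regularity: for $u_0\in H^2_x$ the solution lies in $C([0,T_\eps];H^2_x)\cap C^1([0,T_\eps];L^2_x)$, legitimizing every integration by parts, and the case $u_0\in H^1_x$ then follows by approximating $u_0$ in $H^1_x$ by $H^2_x$ data, using continuous dependence of the flow together with the continuity of $M$ and $E^\eps$ on $H^1_x$.

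The crucial point is uniformity in $\eps$. Writing $\|u^\eps(t)\|_{H^1_x}^2\sim M(u^\eps(t))+\|\nabla u^\eps(t)\|_{L^2_x}^2$ and using that (for $V\ge 0$, as is standard for a mollifier) the quartic term in $E^\eps$ is nonnegative, conservation gives $\|\nabla u^\eps(t)\|_{L^2_x}^2\le E^\eps(u^\eps(t))=E^\eps(u_0)$. The key observation is that $E^\eps(u_0)$ is bounded uniformly in $\eps$ using only the normalization $\int V^\eps=1$, via
\[
\int V^\eps|u_0|^4\,\dd x\le\|u_0\|_{L^\infty_x}^4\int V^\eps\,\dd x=\|u_0\|_{L^\infty_x}^4\lesssim\|u_0\|_{H^1_x}^4
\]
and the one-dimensional embedding $H^1_x\hookrightarrow L^\infty_x$. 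Together with conservation of mass this yields \eqref{uniformly_H1_bound}, uniformly in $t$ and $\eps$, and this uniform-in-time a priori bound upgrades the local solution to a global one on all of $\R$ for each fixed $\eps$ by continuation and time-reversibility of the Schrödinger flow. For \eqref{uniformly_H-1_bound} I would read $\partial_t u^\eps=\ii\Delta u^\eps-\ii V^\eps|u^\eps|^2u^\eps$ off the equation and estimate in $H^{-1}_x$ by duality, using $\|\Delta u^\eps\|_{H^{-1}_x}\lesssim\|u^\eps\|_{H^1_x}$ together with
\[
\Bigl|\int V^\eps|u^\eps|^2u^\eps\,\bar\phi\,\dd x\Bigr|\le\|u^\eps\|_{L^\infty_x}^3\|\phi\|_{L^\infty_x}\int V^\eps\,\dd x\lesssim\|u^\eps\|_{H^1_x}^3\|\phi\|_{H^1_x}
\]
for $\|\phi\|_{H^1_x}\le 1$.

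I expect the main obstacle to be exactly this uniformity in $\eps$: the naive bounds that pass through $\|V^\eps\|_{L^\infty_x}=\eps^{-1}\|V\|_{L^\infty}$ or $\|V^\eps\|_{L^2_x}=\eps^{-1/2}\|V\|_{L^2}$ diverge as $\eps\to 0$. The resolution, used in every estimate above, is to exploit only the scale-invariant normalization $\int V^\eps=1$ and to transfer all $L^\infty_x$ factors onto $u^\eps$ and the test functions, where they are controlled by the $H^1_x$ norm through the one-dimensional Sobolev embedding. Nonnegativity of $V$ is what renders the energy coercive on $\dot H^1_x$ and hence produces the global-in-time a priori bound; the $\eps$-dependence of $T_\eps$ is harmless, since the continuation argument for fixed $\eps$ needs only the a priori bound to be uniform in time.
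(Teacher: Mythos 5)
Your proposal is correct and follows essentially the same route as the paper: a fixed-point argument in $H^1_x$ for local existence (with $\eps$-dependent time), standard conservation of mass and energy made rigorous by density, globalization via the a priori bound, and uniform-in-$\eps$ estimates obtained by exploiting only $\|V^\eps\|_{L^1}=1$ together with the Sobolev embedding $H^1_x\hookrightarrow L^\infty_x$. The only cosmetic difference is in \eqref{uniformly_H-1_bound}, where you estimate the nonlinear term by duality while the paper factors it as $\|V^\eps\|_{H^{-1}_x}\|u^\eps\|_{H^1_x}^3$; these are the same estimate.
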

\begin{proof}
The existence and uniqueness of local solutions is easily proved using a fixed point argument. Conservation of energy and mass follow from a standard argument, see for example \cite{Tao06}. We make the fixed point argument on the set
\begin{equation*}
\mathcal{B} := \{u \in C([0,T(\eps)];H^1(\T)) : \|u\|_{L^\infty H^1} \leq 2 \|u_0\|_{H^1}\}\,,
\end{equation*}
where $T = T(\eps) \sim \|V^{\eps}\|_{H^1}^{-1} \|u_0\|_{H^1}^{-2}$. We extend the solutions to global solutions using conservation of energy, see for example \cite{Caz03}. So it only remains to prove \eqref{uniformly_H1_bound} and \eqref{uniformly_H-1_bound}. We have
\begin{multline*}
\|u^\eps(t)\|_{H^1_x}^2 = \|\nabla u^\eps(t)\|_{L^2_x}^2 + \|u^\eps(t)\|_{L^2_x}^2 \leq E^\eps(u^\eps(t)) + M(u^\eps(t)) \\
= E^\eps(u^\eps(0)) + M(u^\eps(0))
= \|u_0\|_{H^1}^2 + \frac{1}{2}\int V^\eps(x) |u_0(x)|^{4} \, \dd x\,.
\end{multline*}
Applying Hölder's inequality, using $\|V^{\eps}\|_{L^1} = 1$, and using the Sobolev embedding theorem 
\begin{equation}\label{eq:u-eps-H1-norm}
\|u^\eps(t)\|_{H^1_x}^2 \leq \|u_0\|_{H^1}^2 + \frac{1}{2}\|V^\eps\|_{L^1}\|u_0\|_{L^\infty}^{4} \leq C(\|u_0\|_{H^1})\,.
\end{equation}
Taking a supremum in time, we obtain \eqref{uniformly_H1_bound}. For \eqref{uniformly_H-1_bound}, we use the fact mild solutions are strong solutions to write
\begin{align*}
\|\partial_t u^\eps(t)\|_{H^{-1}_x} \leq \|\Delta u^\eps(t)\|_{H^{-1}_x} + \|V^{\eps} |u^\eps(t)|^{2} u^\eps(t)\|_{H^{-1}_x} \leq \|u^\eps(t)\|_{H^1_x} + \|V^{\eps}\|_{H^{-1}_x} \|u^\eps(t)\|_{H^1_x}^{3}\,.
\end{align*}
Using $V^{\eps} \to \delta \in H^{-1}(\T)$, taking a supremum in time, and applying \eqref{uniformly_H1_bound}, we obtain \eqref{uniformly_H-1_bound}.
\end{proof}

\subsection{Existence of Solutions for the \ref{concentrated_NLS_introduction}}
\label{Section:existence_of_solution}
In this section we prove the existence of solutions to \eqref{concentrated_NLS_introduction}. In what follows, we will denote by $H_w^1$ the space $H^1$ endowed with the weak topology.
\begin{lemma}\label{weak_limits_lemma}
Let $T\in\R$ and  $u_0\in H^{1}(\T)$.  
Let $\{u^\eps\}\subset C([0,T];H^{1}(\T))$ be the sequence of solutions to \eqref{smoothed_NLS_introduction}.  
Then, up to a subsequence, the following hold.
\begin{enumerate}[label=(\roman*),font=\normalfont]
    \item \label{property_i} $u^{\eps_j}\to u$ strongly in $C([0,T];H^{s}(\T))$ for every $s\in(\frac12,1)$.
    \item \label{property_ii} $u^{\eps_j}(t)\rightharpoonup u(t)$ weakly in $H^{1}(\T)$ for every $t\in[0,T]$.
    \item \label{property_iii} $u\in C\bigl([0,T];H^{1}_w(\T)\bigr)$.
\end{enumerate}
\end{lemma}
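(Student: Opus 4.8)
The plan is to derive all three statements from the two uniform bounds \eqref{uniformly_H1_bound} and \eqref{uniformly_H-1_bound} of Proposition \ref{smoothed_well_posedness_proposition}, combined with the compactness results recorded above. For \ref{property_i}, I would apply the Aubin--Lions--Simon lemma with the triple $X_0 = H^1(\T)$, $X = H^{s}(\T)$, $X_1 = H^{-1}(\T)$ and exponent $q = \infty$. The compact embedding $H^1 \hookrightarrow H^s$ is the Rellich--Kondrachov theorem (with $\sigma = 1-s > 0$), while $H^s \hookrightarrow H^{-1}$ is continuous since $s > -1$. The bounds \eqref{uniformly_H1_bound} and \eqref{uniformly_H-1_bound} say precisely that $\{u^\eps\}$ is bounded in $W = \{u \in L^\infty([0,T]; H^1) : \partial_t u \in L^\infty([0,T]; H^{-1})\}$, so the lemma yields a subsequence $u^{\eps_j}$ converging in $C([0,T]; H^{s_0})$ for a fixed $s_0 \in (\tfrac12, 1)$. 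To obtain convergence for \emph{every} $s \in (\tfrac12, 1)$ along a single subsequence, I would diagonalize over a sequence $s_k \uparrow 1$; alternatively, for $s \in (s_0,1)$ one can interpolate $\|u^{\eps_j} - u^{\eps_k}\|_{H^s} \lesssim \|u^{\eps_j} - u^{\eps_k}\|_{H^{s_0}}^{\theta}\|u^{\eps_j} - u^{\eps_k}\|_{H^1}^{1-\theta}$ and use that the first factor is Cauchy while the second is uniformly bounded, so $\{u^{\eps_j}\}$ is Cauchy in $C([0,T];H^s)$.

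For \ref{property_ii}, fix $t \in [0,T]$. By \eqref{uniformly_H1_bound} the sequence $\{u^{\eps_j}(t)\}$ is bounded in the reflexive space $H^1$, so every subsequence admits a further subsequence converging weakly in $H^1$ to some $v$. Since $H^1 \hookrightarrow H^s$ continuously, this subsequence also converges weakly to $v$ in $H^s$; but by \ref{property_i} it converges strongly, hence weakly, in $H^s$ to $u(t)$. Uniqueness of weak limits forces $v = u(t)$. Thus every weak-$H^1$ subsequential limit of the bounded sequence equals $u(t)$, and by reflexivity the whole sequence $u^{\eps_j}(t)$ converges weakly to $u(t)$ in $H^1$.

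For \ref{property_iii}, I would first use weak lower semicontinuity of the $H^1$-norm together with \ref{property_ii} and \eqref{uniformly_H1_bound} to get $\|u(t)\|_{H^1} \le \liminf_j \|u^{\eps_j}(t)\|_{H^1} \le C$ for every $t$, so that $u \in L^\infty([0,T]; H^1)$. Combining this with the strong continuity $u \in C([0,T]; H^s)$ from \ref{property_i}, weak continuity into $H^1$ follows from the standard argument: for $t_n \to t$, the sequence $\{u(t_n)\}$ is bounded in $H^1$; any weak-$H^1$ limit point must coincide with $u(t)$, because $u(t_n) \to u(t)$ strongly in $H^s$ and weak limits are unique; reflexivity then yields $u(t_n) \rightharpoonup u(t)$ in $H^1$. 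This is precisely the assertion $u \in C([0,T]; H^1_w)$.

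The main subtlety will be the last step, \ref{property_iii}: upgrading the strong $H^s$-continuity and the uniform $H^1$-bound to genuine weak continuity in $H^1$. This is the only point that is not a direct invocation of a cited compactness theorem, and it relies essentially on the reflexivity of $H^1$ together with the uniqueness of weak limits. Everything else reduces to checking the hypotheses of the Aubin--Lions--Simon and Rellich--Kondrachov statements and to the bookkeeping needed to make a single extracted subsequence work simultaneously for all $s \in (\tfrac12,1)$.
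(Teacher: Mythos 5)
Your proposal is correct and follows essentially the same route as the paper: the same Aubin--Lions--Simon triple $H^1\Subset H^s\hookrightarrow H^{-1}$ with $q=\infty$, the same diagonal extraction over $s_n\nearrow 1$, and the same use of the uniform bounds from Proposition \ref{smoothed_well_posedness_proposition}. For \ref{property_ii} and \ref{property_iii} the paper phrases the argument via Hahn--Banach extension of functionals rather than your reflexivity-plus-uniqueness-of-weak-limits formulation, but these are the same standard argument; if anything, your version is the cleaner one.
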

\begin{proof}
As a first step, we prove the compactness in $C([0,T];H^{s})$.
Set 
\begin{align*}
W&:=\bigl\{v\in L^\infty({[0,T]};H^1(\T)) : \partial_t v\in L^\infty({[0,T];H^{-1}(\T)})\bigr\}\,,\\
\|v\|_{W}&:=\|v\|_{L^\infty_tH^{1}_x}+\|\partial_t v\|_{L^\infty_tH^{-1}_x}\,.
\end{align*}
By Proposition \ref{smoothed_well_posedness_proposition}, the family $\{u^\eps\}$ is bounded in $W$.
For $s\in(\tfrac12,1)$, Rellich--Kondrachov implies that
\[
H^{1}(\T)\xhookrightarrow{\;\text{compact}\;}
H^{s}(\T)\xhookrightarrow{\;\text{continuous}\;}
H^{-1}(\T)\,.
\]

By the Aubin--Lions--Simon compactness lemma, using the compact embedding $H^1(\T) \Subset H^s(\T)$ and the continuous embedding $H^s(\T) \hookrightarrow H^{-1}(\T)$, we obtain that
\[
W \hookrightarrow C([0,T]; H^s(\T))
\]
is compact.
Hence for each fixed $s\in(\tfrac12,1)$, there exists a subsequence $u^{\eps_j^{(s)}}$ converging strongly in $C([0,T];H^{s})$.

Now, for the diagonal extraction, choose $s_n:=1-\frac1n\nearrow1$.  By iterating the first step and taking a standard diagonal subsequence, we obtain a single subsequence (still denoted $u^{\eps_j}$) that converges strongly in $C([0,T];H^{s_n}(\T))$ for every $n\in\N$.  Since $H^{s_{n+1}}(\T)\hookrightarrow H^{s_{n}}(\T)$ continuously, this implies strong convergence in $C([0,T];H^{s}(\T))$ for \emph{all} $s\in(0,1)$, proving \ref{property_i}.

It remains to prove \ref{property_ii} and \ref{property_iii}. Fix $t \in [0,T]$. One has that the sequence $\{u^{\eps_j}(t)\}$ is bounded in $H^1(\T)$. So Banach--Alaoglu implies every subsequence has a further subsequence such that
\begin{equation*}
u^{\eps_{j_k}}(t) \rightharpoonup v
\qquad\text{weakly in }H^1(\T)    
\end{equation*}
for some $v \in H^1(\T)$. By \ref{property_i}, we also have that
\begin{equation*}
u^{\eps_{j_k}}(t) \to u(t)
\qquad\text{strongly in }H^s(\T)
\end{equation*}
for every $s \in (\frac{1}{2},1)$. It follows that $v=u(t)$. Since the weak cluster point is unique, the whole sequence satisfies
\begin{equation*}
u^{\eps_j}(t) \rightharpoonup u(t)
\qquad\text{weakly in }H^1(\T).
\end{equation*}
Property \ref{property_iii} follows for example from \cite[Chapter III, Lemma 1.4]{Tem73}.
\end{proof}

We now show that the function constructed in Lemma \ref{weak_limits_lemma} is a solution of the \eqref{concentrated_NLS_introduction}.
\begin{proposition}
\label{existence_proposition}
Suppose that $u$ is the function constructed in Lemma \ref{weak_limits_lemma}. Then for each time $t\in [0,T]$, $u(t)$ is a mild solution of \eqref{concentrated_NLS_introduction}.
\end{proposition}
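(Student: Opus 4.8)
The plan is to pass to the limit $\eps_j \to 0$ in the Duhamel formulation of the smoothed equation. Along the subsequence from Lemma \ref{weak_limits_lemma}, each $u^{\eps_j}$ is a mild solution of \eqref{smoothed_NLS_introduction}, so
\[
u^{\eps_j}(t) = S(t)u_0 - \ii\int_0^t S(t-t')\, V^{\eps_j}|u^{\eps_j}|^2u^{\eps_j}(t')\,\dd t'.
\]
The linear term is independent of $\eps_j$, and by Lemma \ref{weak_limits_lemma}\ref{property_i} the left-hand side converges to $u(t)$ in $H^{s}_x$ for every $s\in(\tfrac12,1)$, hence also in $H^{-s}_x$. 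It therefore suffices to show that, for each fixed $t$, the nonlinear Duhamel term converges in $H^{-s}_x$ to $\int_0^t S(t-t')\,\delta|u|^2u(t')\,\dd t'$; identifying limits in $H^{-s}_x$ then yields that $u$ solves the mild formulation of \eqref{concentrated_NLS_introduction}.

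The key step is to show that the singular nonlinear density converges in a negative Sobolev norm, uniformly in time. Fixing $s\in(\tfrac12,1)$, I would split
\[
V^{\eps_j}|u^{\eps_j}|^2u^{\eps_j} - \delta|u|^2u = V^{\eps_j}\big(|u^{\eps_j}|^2u^{\eps_j} - |u|^2u\big) + \big(V^{\eps_j}-\delta\big)|u|^2u.
\]
For the first term, the Banach algebra property of $H^s_x$ with $s>\tfrac12$, together with \eqref{Hs_bound} and the uniform bound \eqref{uniformly_H1_bound}, gives $\||u^{\eps_j}|^2u^{\eps_j}-|u|^2u\|_{H^s_x}\lesssim \|u^{\eps_j}-u\|_{H^s_x}\to0$ uniformly in $t'$ by Lemma \ref{weak_limits_lemma}\ref{property_i}. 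Combined with the uniform product estimate $\|V^{\eps_j}g\|_{H^{-s}_x}\lesssim \|V^{\eps_j}\|_{H^{-s}_x}\|g\|_{H^s_x}\lesssim\|g\|_{H^s_x}$, which follows by duality from \eqref{Hs_bound} and the fact that $\|V^{\eps_j}\|_{H^{-s}_x}$ is uniformly bounded for $s>\tfrac12$, this term tends to $0$ in $H^{-s}_x$ uniformly in $t'$. For the second term, I would note that $V^{\eps_j}\to\delta$ strongly in $H^{-s}_x$: since the Fourier multipliers obey $|\widehat{V^{\eps_j}}(n)|\le\|V\|_{L^1}=1$ and $\widehat{V^{\eps_j}}(n)\to1=\hat\delta(n)$ pointwise, dominated convergence gives $\|V^{\eps_j}-\delta\|_{H^{-s}_x}^2=\sum_n(1+n^2)^{-s}|\widehat{V^{\eps_j}}(n)-1|^2\to0$. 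As $|u|^2u$ is uniformly bounded in $H^s_x$, the estimate $\|(V^{\eps_j}-\delta)|u|^2u\|_{H^{-s}_x}\lesssim\|V^{\eps_j}-\delta\|_{H^{-s}_x}\||u|^2u\|_{H^s_x}$ shows this term also vanishes uniformly in $t'$.

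Finally, I would transfer these estimates through the time integral using the unitarity of $S(t-t')$ on $H^{-s}_x$, which gives
\[
\Big\|\int_0^t S(t-t')\big[V^{\eps_j}|u^{\eps_j}|^2u^{\eps_j}-\delta|u|^2u\big](t')\,\dd t'\Big\|_{H^{-s}_x}\le \int_0^t \big\|V^{\eps_j}|u^{\eps_j}|^2u^{\eps_j}-\delta|u|^2u\big\|_{H^{-s}_x}\,\dd t'\longrightarrow0.
\]
Here the target integrand is a well-defined element of $C([0,T];H^{-s}_x)$: since $|u|^2u\in C([0,T];H^s_x)\hookrightarrow C([0,T];L^\infty_x)$, the product $\delta|u|^2u=\big(|u(0,t')|^2u(0,t')\big)\,\delta$ satisfies $\|\delta|u|^2u\|_{H^{-s}_x}\lesssim\||u|^2u\|_{L^\infty_x}\lesssim\||u|^2u\|_{H^s_x}$.

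The main obstacle is precisely the convergence of the singular product $V^{\eps_j}|u^{\eps_j}|^2u^{\eps_j}$: neither factor converges in a single fixed Sobolev space in which the product is continuous — $V^{\eps_j}$ lives only in negative regularity spaces while the nonlinearity converges only in positive ones. The resolution is to exploit the duality pairing between $H^s_x$ and $H^{-s}_x$ for $s>\tfrac12$, placing exactly one factor in each via the splitting above. Once this uniform $H^{-s}_x$ convergence is in hand, the passage to the limit in the Duhamel formula and the identification of $u$ as a mild solution are routine.
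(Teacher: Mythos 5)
Your proof is correct and follows essentially the same strategy as the paper: pass to the limit in the Duhamel formula, measure everything in $H^{-s}_x$ for $s\in(\tfrac12,1)$, and control the nonlinear term by pairing one factor in $H^{-s}_x$ against the other in the Banach algebra $H^{s}_x$, using $V^{\eps_j}\to\delta$ in $H^{-s}_x$ together with the strong convergence $u^{\eps_j}\to u$ in $C([0,T];H^{s}_x)$ and the uniform $H^1$ bounds. The only (immaterial) difference is that the paper's cross-term splitting attaches $V^{\eps_j}-\delta$ to $|u^{\eps_j}|^2u^{\eps_j}$ rather than to $|u|^2u$, and you additionally spell out the dominated-convergence proof that $V^{\eps_j}\to\delta$ in $H^{-s}_x$, which the paper takes for granted.
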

\begin{proof}
For notational simplicity, we write $\eps_j = \eps$. Let $t \in [0,T]$ and take $s \in (\frac{1}{2},1)$. We have
\begin{multline}
\label{existence_intermediate}
\left\|u(t) - S(t)u_0 + \ii \int^t_0 S(t-t') \delta|u|^{2} u \, \dd t'\right\|_{H^{-s}_x} \leq \underbrace{\|u(t)-u^\eps(t)\|_{H^{-s}_x}}_{\mathrm{I}} + \underbrace{\|S(t)\left[u_0 - u_0^\eps\right]\|_{H^{-s}_x}}_{\mathrm{II}} \\
+ \underbrace{\left\|u^\eps(t) - S(t)u_0^\eps + \ii \int^t_0 S(t-t') V^\eps|u^\eps|^{2} u^\eps \, \dd t'\right\|_{H^{-s}_x}}_{\mathrm{III}} \\
+ \underbrace{\left\|\ii \int^t_0 S(t-t')\left[V^{\eps} |u^\eps|^{2}u^\eps - \delta|u|^2u \right] \dd t'\right\|_{H^{-s}_x}}_{\mathrm{IV}}.
\end{multline}
By construction, $\mathrm{III}$ is zero because $u^\eps$ is a mild solution of \eqref{smoothed_NLS_introduction}. We also have $\mathrm{II} = 0$. Since the left hand side of \eqref{existence_intermediate} is independent of $\eps$ and $\mathrm{I} \to 0$ by construction, it only remains to show that $\mathrm{IV} \to 0$ as $\eps \to 0$. We have
\begin{equation}
\label{existence_intermediate_2}
\mathrm{IV} \leq \int_0^t \|S(t-t') |u^\eps|^{2}u^\eps (V^{\eps} - \delta)\|_{H^{-s}}\dd t' + \int_0^t \|S(t-t') \left[|u^\eps|^{2}u^\eps - |u|^2u\right] \delta \|_{H^{-s}} \dd t'.
\end{equation}
Recall that the Schrödinger kernel does not change the Sobolev norm of a function. So the first term in \eqref{existence_intermediate_2} is bounded by
\begin{equation*}
t \|u^\eps\|^{3}_{L^{\infty}_{[0,T]}H^1_x}\|V^{\eps} - \delta\|_{H^{-s}_x} \to 0
\end{equation*}
as $\eps \to 0$, where we have used \eqref{eq:u-eps-H1-norm}. For the second term of \eqref{existence_intermediate_2}, we have
\begin{equation}
\label{existence_intermediate_3}
\| \left[|u^\eps|^{2}u^\eps(t') - |u|^{2}u(t')\right]\delta \|_{H^{-s}_x} \lesssim \|\delta\|_{H^{-s}_x}\|u^\eps(t') - u(t')\|_{H^{s}_x} (\|u^\eps(t')\|_{H^{1}_x} + \|u(t')\|_{H^{1}_x})^{2}.
\end{equation}

So we have that the second term in \eqref{existence_intermediate_2} is less than or equal to
\begin{equation*}
T \|\delta\|_{H^{-s}} (\|u^{\eps}\|_{L^\infty_{[0,T]} H^1(\T)} + \|u\|_{L^\infty_{[0,T]} H^1(\T)})^2  \|u^{\eps} - u\|_{L^\infty_{[0,T]}H^{s}(\T)} \to 0\,.
\end{equation*}
Here we have used \eqref{uniformly_H1_bound} and that $u^\eps \to u$ in $C([0,T];H^s(\T))$. So $\mathrm{IV} \to 0$ as $\eps \to 0$, and $u$ is a mild solution of \eqref{concentrated_NLS_introduction} for each time $t \in [0,T]$.
\end{proof}

\subsection{Conservation Laws for a Solution of the \ref{concentrated_NLS_introduction}}
Recall that the mass and the energy for the \eqref{concentrated_NLS_introduction} are given by
\begin{align*}
M(u) &:= \int_{X} |u(x,t)|^2 \, \dd x \,,\\
E(u) &:= \int_{X} |\nabla u(x,t)|^2 \, \dd x + \frac{1}{2} |u(0,t)|^4. 
\end{align*}
In this section, we show that solutions to the \eqref{concentrated_NLS_introduction} constructed in Section \ref{Section:existence_of_solution} conserve both the mass and the energy.
\begin{proposition}
\label{conservation_of_mass_proposition}
The solution $u$ constructed in Lemma \ref{weak_limits_lemma} conserves the mass. In other words, 
\begin{equation*}
M(u(t)) = M(u_0)
\end{equation*}
for any time $t \in [0,T]$, for any $T > 0$.
\end{proposition}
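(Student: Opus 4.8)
The plan is to exploit the fact that the mass is simply the squared $L^2$ norm, $M(v)=\|v\|_{L^2_x}^2$, together with the conservation of mass along the smoothed flow. By Proposition \ref{smoothed_well_posedness_proposition}, each approximant satisfies the identity $\|u^\eps(t)\|_{L^2_x}^2 = M(u^\eps(t)) = M(u_0)$ for every $t \in [0,T]$. It therefore suffices to pass to the limit $\eps \to 0$ in this identity and to verify that the left-hand side converges to $\|u(t)\|_{L^2_x}^2$.

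First I would fix $t \in [0,T]$ and choose some $s \in (\tfrac{1}{2},1)$, and invoke property \ref{property_i} of Lemma \ref{weak_limits_lemma}, which provides strong convergence $u^{\eps_j} \to u$ in $C([0,T];H^s(\T))$; in particular $u^{\eps_j}(t) \to u(t)$ strongly in $H^s(\T)$ at the fixed time $t$. Since $s > 0$, the embedding $H^s(\T) \hookrightarrow L^2(\T)$ is continuous, so this convergence upgrades to strong convergence in $L^2(\T)$, whence $\|u^{\eps_j}(t)\|_{L^2_x} \to \|u(t)\|_{L^2_x}$. Taking the limit along $\eps_j$ in the conservation identity for the smoothed flow then yields
\[
M(u(t)) = \lim_{j \to \infty} M(u^{\eps_j}(t)) = M(u_0)\,,
\]
as desired.

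The only point requiring care is that the weak convergence in $H^1$ furnished by property \ref{property_ii} alone would give merely lower semicontinuity of the norm, namely $\|u(t)\|_{L^2_x} \leq \liminf_{j} \|u^{\eps_j}(t)\|_{L^2_x} = \|u_0\|_{L^2_x}$, and hence only the single inequality $M(u(t)) \leq M(u_0)$. The reverse inequality, and thus the equality, is genuinely a consequence of the \emph{strong} $H^s$ convergence from property \ref{property_i}; this is the crux of the argument, though it is immediate once that convergence is in hand. No space-time estimates or manipulation of the nonlinearity are needed, in contrast to the energy conservation, since the $L^2$ norm is controlled directly by the subcritical $H^s$ topology in which we already have compactness.
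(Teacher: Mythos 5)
Your proposal is correct and follows essentially the same route as the paper: both pass to the limit in the identity $M(u^{\eps_j}(t))=M(u_0)$ using the strong convergence $u^{\eps_j}(t)\to u(t)$ in $H^{s}(\T)$ from Lemma \ref{weak_limits_lemma}\ref{property_i} together with the continuous embedding $H^{s}\hookrightarrow L^{2}$. Your additional remark that weak $H^1$ convergence alone would only yield $M(u(t))\leq M(u_0)$ is a correct and worthwhile clarification, but the argument itself is the same as the paper's.
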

\begin{proof}
By construction, we know from Lemma \ref{weak_limits_lemma}, the trivial bound $\|\cdot\|_{L^2} \leq \|\cdot\|_{H^{s}}$, and continuity in time that, for any $t \in [0,T]$,
\begin{equation*}
\lim_{j \to \infty} \|u^{\eps_j}(t)\|_{L^2}^2 = \|u(t)\|_{L^2}^2\,.
\end{equation*}
However, $u^{\eps_j}$ is mass preserving, so we know
\begin{equation*}
\|u^{\eps_j}(t)\|_{L^2}^2 = \|u_0\|_{L^2}^2\,,
\end{equation*}
so the result follows.
\end{proof}
To prove the conservation of energy, we define the corresponding potential energies
\begin{align*}
E_p(u(t)) &:= \frac{1}{2}|u(0,t)|^4, \\
E_p^\eps(u(t)) &:= \frac{1}{2} \int V^{\eps}(x)|u(x,t)|^4 \, \dd x \,.
\end{align*}
Here we note that $E_p(u(t))$ is well-defined because $u(t) \in H^s$ for $s \in (\frac{1}{2},1]$.

\begin{lemma}
\label{potential_energy_convergence_lemma}
$E_p^{\eps_j}(u^{\eps_j}(t)) \to E_p(u(t))$ for all $t \in [0,T]$, for any $T > 0$.
\end{lemma}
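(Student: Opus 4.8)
The plan is to compare the smoothed potential energy with the pointwise quantity $\tfrac{1}{2}|u(0,t)|^4$ by inserting the intermediate term $\tfrac{1}{2}\int V^{\eps_j}(x)|u(x,t)|^4\,\dd x$. Writing $E_p^{\eps_j}(u^{\eps_j}(t)) - E_p(u(t))$ as the sum of
\[
\mathrm{A} := \frac{1}{2}\int V^{\eps_j}(x)\left[|u^{\eps_j}(x,t)|^4 - |u(x,t)|^4\right]\dd x
\]
and
\[
\mathrm{B} := \frac{1}{2}\int V^{\eps_j}(x)|u(x,t)|^4\,\dd x - \frac{1}{2}|u(0,t)|^4,
\]
it suffices to show that each term vanishes as $j \to \infty$.

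For term $\mathrm{A}$, I would use $\|V^{\eps_j}\|_{L^1} = 1$ to bound $|\mathrm{A}| \lesssim \big\||u^{\eps_j}(\cdot,t)|^4 - |u(\cdot,t)|^4\big\|_{L^\infty_x}$. The elementary inequality $\big||a|^4 - |b|^4\big| \lesssim (|a|^3+|b|^3)|a-b|$ together with the Sobolev embedding $H^s_x \hookrightarrow L^\infty_x$ (valid for $s \in (\tfrac{1}{2},1)$) then gives
\[
|\mathrm{A}| \lesssim \left(\|u^{\eps_j}(t)\|_{H^s_x}^3 + \|u(t)\|_{H^s_x}^3\right)\|u^{\eps_j}(t)-u(t)\|_{H^s_x}.
\]
By the uniform bound \eqref{uniformly_H1_bound} and the strong convergence $u^{\eps_j}\to u$ in $C([0,T];H^s(\T))$ from Lemma \ref{weak_limits_lemma}\ref{property_i}, the prefactor is bounded uniformly in $j$ and $t$, while the last factor tends to $0$. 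Hence $\mathrm{A}\to 0$.

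For term $\mathrm{B}$, I would appeal to the fact that $\{V^{\eps_j}\}$ is an approximate identity. Since $u(\cdot,t)\in H^s(\T)$ with $s > \tfrac{1}{2}$, the function $|u(\cdot,t)|^4$ is continuous, and in particular continuous at the origin. Because $V\in C_c^\infty(\R)$ has compact support and unit integral, for $\eps_j$ small enough the support of $V^{\eps_j}$ lies in a single fundamental domain of $\T$, so the convergence $\int V^{\eps_j}(x)|u(x,t)|^4\,\dd x \to |u(0,t)|^4$ reduces to the standard statement that a mollification of a continuous function converges to its value at the point of concentration. Thus $\mathrm{B}\to 0$.

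The only genuinely delicate point is term $\mathrm{B}$: it is essential that $s > \tfrac{1}{2}$, which simultaneously guarantees the embedding $H^s_x\hookrightarrow L^\infty_x$ used in term $\mathrm{A}$ and the continuity of $|u(\cdot,t)|^4$ at $0$ used in term $\mathrm{B}$. One must also take care that the mollifier is understood on the torus; however, since $V$ is compactly supported this is harmless for small $\eps_j$. The convergence obtained is pointwise in $t$, which is all that is claimed, though the estimate for $\mathrm{A}$ is in fact uniform in $t$.
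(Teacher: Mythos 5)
Your proof is correct and follows essentially the same route as the paper: both arguments reduce the claim to (i) the $L^\infty_x$ convergence $u^{\eps_j}\to u$ supplied by the strong $C([0,T];H^s)$ convergence with $s>\tfrac12$, and (ii) the approximate-identity concentration of $V^{\eps_j}$ at the origin combined with the continuity of $u(\cdot,t)$ there. The only difference is cosmetic — you split $|u^{\eps_j}|^4-|u(0,t)|^4$ into two fourth-power differences before applying the elementary inequality, whereas the paper factors out the cubic prefactor first and then splits $|u^{\eps_j}(x,t)-u(0,t)|$.
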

\begin{proof}
For simplicity of notation, we write $\eps_j =\eps$. Recall that $\int V_\eps \, \dd x = 1$ and $V_\eps \geq 0$. Note that
\begin{equation*}
\int V_\eps(x) \left[ |u^\eps(x,t)|^4 - |u(0,t)|^4 \right] \, \dd x \leq \left( \|u^\eps(t)\|_{L^\infty_x} + \|u(t)\|_{L^\infty_x}\right)^3 \left[\int V_\eps(x)\left|u^\eps(x,t) - u(0,t)\right| \, \dd x\right].
\end{equation*}
So it suffices to show
\begin{equation*}
\left| \int V_\eps(x)\left|u^\eps(x,t) - u(0,t)\right| \, \dd x \right|
\leq \underbrace{\int V_{\eps}(x)\left|u^{\eps}(x,t) - u(x,t)\right| \dd x}_{\leq \|u^\eps(t) - u(t)\|_{L^\infty_x}\|V_{\eps}\|_{L^1} \to 0} + \underbrace{\int V_{\eps}(x)\left|u(x,t) - u(0,t)\right| \dd x}_{\to 0}.
\end{equation*}
For the second term, we use that $x \mapsto u(x,t)$ is continuous.
\end{proof}

\begin{lemma}\label{energy_inequality_lemma}
	Let $u$ be the function constructed in Lemma
	\ref{weak_limits_lemma}. Then
	\[
		E(u(t))\leq E(u_0)
	\]
	for every $t\in[0,T]$.
\end{lemma}
\begin{proof}
Again, for simplicity of notation, we write $\eps_j = \eps$. We have
\begin{multline*}
M(u(t)) + E(u(t)) - E_p(u(t)) = \|u(t)\|^2_{H^1} \leq \liminf_{\eps \to 0} \|u^\eps(t)\|^2_{H^1} \\
= \liminf_{\eps \to 0} \left[ M(u^\eps (t)) + E(u^\eps(t)) - E_p(u^\eps(t)) \right] = M(u_0) + E(u_0) - E_p(u(t))\,.
\end{multline*}
The first inequality is a consequence of the weak convergence of $u^{\eps_j}(t)$ to $u(t)$ in $H^1(\T)$. The final line follows from recalling that the initial condition of $u^\eps$ is $u_0$, that energy and mass are conserved for the approximate equation, and Lemma \ref{potential_energy_convergence_lemma}. The result follows from recalling that the mass of $u$ is also conserved.
\end{proof}

\begin{proposition}
\label{conservation_of_energy_proposition}
The solution $u$ constructed in Lemma \ref{weak_limits_lemma} conserves the energy. In other words, 
\begin{equation*}
E(u(t)) = E(u_0)
\end{equation*}
for any time $t \in [0,T]$, for any $T > 0$.
\end{proposition}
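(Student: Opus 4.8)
The plan is to upgrade the one-sided bound $E(u(t)) \le E(u_0)$ just established to an equality by combining it with a time-reversal argument that supplies the reverse inequality $E(u_0) \le E(u(t))$. The structural input I would exploit is that both \eqref{smoothed_NLS_introduction} and \eqref{concentrated_NLS_introduction} are invariant under the symmetry $u(x,t)\mapsto\overline{u(x,-t)}$, and that $E$, $E^\eps$ and $M$ are all unchanged by complex conjugation. Since each $u^\eps$ is global in time by Proposition \ref{smoothed_well_posedness_proposition}, I would run the compactness argument of Lemma \ref{weak_limits_lemma} on a symmetric interval $[-T,T]$, so that the preceding lemma gives $E(u(t))\le E(u_0)$ for every $t\in[-T,T]$.

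First I would pin down the exact asymptotics of the approximate norms. Using conservation of $M$ and $E^\eps$ from Proposition \ref{smoothed_well_posedness_proposition}, one has $\|u^\eps(t)\|_{H^1_x}^2 = M(u_0) + E^\eps(u_0) - E_p^\eps(u^\eps(t))$, and since $E^\eps(u_0)\to E(u_0)$ together with Lemma \ref{potential_energy_convergence_lemma}, the limit $\lim_{\eps\to0}\|u^\eps(t)\|_{H^1_x}^2 = M(u_0) + E(u_0) - E_p(u(t))$ exists. Comparing with $\|u(t)\|_{H^1_x}^2 = M(u_0) + E(u(t)) - E_p(u(t))$, which uses mass conservation from Proposition \ref{conservation_of_mass_proposition}, the energy defect is exactly
\[
E(u_0) - E(u(t)) = \lim_{\eps\to0}\|u^\eps(t)\|_{H^1_x}^2 - \|u(t)\|_{H^1_x}^2 \ge 0\,.
\]
Hence proving $E(u(t)) = E(u_0)$ is \emph{equivalent} to proving that the weak convergence $u^\eps(t)\rightharpoonup u(t)$ in $H^1(\T)$ from Lemma \ref{weak_limits_lemma}\ref{property_ii} is in fact strong, i.e. that no energy escapes to high frequencies in the limit.

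To close the defect I would reverse time. The conjugated functions $v^\eps(x,t):=\overline{u^\eps(x,-t)}$ again solve \eqref{smoothed_NLS_introduction}, with data $\overline{u_0}$ and the same conserved energy $E^\eps(\overline{u_0}) = E^\eps(u_0)$; along the chosen subsequence they converge (strongly in $H^s$, weakly in $H^1$) to $v(x,t)=\overline{u(x,-t)}$, which solves \eqref{concentrated_NLS_introduction} by Proposition \ref{existence_proposition}, and $E(v(t)) = E(u(-t))$. The aim is to apply the one-sided inequality to the backward evolution but referenced at the \emph{correct} time: if the solution restarted from the configuration $u(t)$ can be identified with the reversed trajectory, then anchoring the conservation identity at that starting configuration makes the comparison value $E(u(t))$ rather than $E(u_0)$, which is what yields $E(u_0)\le E(u(t))$. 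Combined with the forward inequality this forces equality for every $t$, hence strong convergence $u^\eps(t)\to u(t)$ in $H^1(\T)$; feeding this back into the weak continuity of Lemma \ref{weak_limits_lemma}\ref{property_iii} then upgrades $u$ to an element of $C([0,T];H^1(\T))$, as required by Theorem \ref{existence_uniqueness_theorem}.

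The main obstacle is precisely this re-anchoring. Naively conjugating and reversing the \emph{same} approximating family leaves the conserved reference energy equal to $E^\eps(u_0)$, which is reversal-invariant; the argument then merely reproduces the forward inequality in the variable $-t$ instead of reversing its direction, so no new information is gained. What is genuinely needed is that the backward flow started from $u(t)$ coincides with the reversed trajectory $\overline{u(\cdot)}$, so that the reference energy becomes $E(u(t))$, and establishing this identification is exactly where time-reversibility must be paired with continuity/uniqueness of the limiting \eqref{concentrated_NLS_introduction} flow. I would therefore either invoke uniqueness of solutions (so that the restarted solution equals $u$) or, to keep Section \ref{Section:sNLS} self-contained, prove the required strong $H^1$ convergence directly by a difference estimate on $u^\eps - u$, using the uniform bound \eqref{uniformly_H1_bound}, the algebra estimate \eqref{Hs_bound}, and $V^\eps\to\delta$ in $H^{-1}(\T)$ to control the singular nonlinearity.
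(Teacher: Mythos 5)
Your proposal is essentially the paper's argument: the paper also upgrades the one-sided inequality $E(u(t))\le E(u_0)$ by a time-translation/reversal trick, defining $v(x,t):=u(x,T+t)$ and using that the energy-decay inequality also holds for negative times to conclude $E(u_0)=E(v(-T))\le E(v(0))=E(u(T))$, following \cite[Proposition 3.5]{HGKV24}. The ``re-anchoring'' difficulty you isolate --- that the decay inequality must be applicable to the solution restarted from the data $u(T)$, i.e.\ that the time-translate of $u$ can be identified with a limit of the approximation scheme run from $u(T)$ --- is exactly the step the paper performs (and disposes of by citation), so you have correctly located the crux rather than missed it. One caution: of your two proposed ways to close this step, invoking uniqueness is circular in this paper's architecture, since Theorem \ref{uniqueness_theorem} is stated for $C([0,T];H^1(\T))$ solutions and the upgrade from weak to strong $H^1$ continuity (Proposition \ref{continuity_H1_proposition}) itself relies on conservation of energy; at this stage one only knows $u\in C([0,T];H^s(\T))\cap C([0,T];H^1_w(\T))$. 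Your alternative route --- proving strong $H^1(\T)$ convergence of $u^{\eps}(t)$ directly --- would suffice but is not carried out here and is not what the paper does; your observation that the energy defect coincides exactly with the defect of strong $H^1$ convergence is correct and is, in effect, how Proposition \ref{continuity_H1_proposition} later recovers strong continuity once conservation is known.
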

\begin{proof}
Note that all of our proofs also show the existence of a solution for negative time as well, and that the decay of energy holds for all $|t| < T_*$, where $T_*$ is the time of existence. Suppose we had a time $T$ such that $E(u(T)) < E(u(0))$. Following \cite[Proposition 3.5]{HGKV24}, we can define $v(x,t) := u(x,T+t)$. Then we obtain a solution with initial condition $v(0)$ such that $E(v(-T)) > E(v(0))$, which is a contradiction. So energy must be conserved.
\end{proof}
In the following proposition, we upgrade weak continuity to strong continuity.
\begin{proposition}
\label{continuity_H1_proposition}
Let $u$ be the function constructed in Lemma \ref{weak_limits_lemma}. Then $u \in C([0,T];H^1(\T))$.
\end{proposition}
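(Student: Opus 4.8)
The plan is to exploit the Radon--Riesz property of Hilbert space: weak convergence together with convergence of norms implies strong convergence. We already know from Lemma \ref{weak_limits_lemma}\ref{property_iii} that $u \in C([0,T];H^1_w(\T))$, so the only thing left to establish is that $t \mapsto \|u(t)\|_{H^1_x}$ is continuous; the strong continuity will then follow automatically.

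To control the norm, I would rewrite it in terms of the conserved quantities. Writing
\[
\|u(t)\|_{H^1_x}^2 = \|\nabla u(t)\|_{L^2_x}^2 + \|u(t)\|_{L^2_x}^2 = M(u(t)) + E(u(t)) - E_p(u(t)),
\]
and invoking Proposition \ref{conservation_of_mass_proposition} and Proposition \ref{conservation_of_energy_proposition}, both $M(u(t)) = M(u_0)$ and $E(u(t)) = E(u_0)$ are constant in $t$. Hence
\[
\|u(t)\|_{H^1_x}^2 = M(u_0) + E(u_0) - \tfrac{1}{2}|u(0,t)|^4,
\]
so continuity of the norm reduces to continuity of $t \mapsto |u(0,t)|^4$. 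For this I would use Lemma \ref{weak_limits_lemma}\ref{property_i}, which gives $u \in C([0,T];H^s(\T))$ for every $s \in (\tfrac12,1)$, together with the Sobolev embedding $H^s(\T) \hookrightarrow C(\T)$ valid for $s > \tfrac12$. Point evaluation at the origin is a bounded linear functional on $H^s$, so $t \mapsto u(0,t)$ is continuous, and therefore so is $t \mapsto |u(0,t)|^4$. This proves $t \mapsto \|u(t)\|_{H^1_x}$ is continuous.

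Finally I would combine the two ingredients. Fix $t \in [0,T]$ and let $t_n \to t$. Then $u(t_n) \rightharpoonup u(t)$ in $H^1$ and $\|u(t_n)\|_{H^1_x} \to \|u(t)\|_{H^1_x}$, so expanding in the $H^1$ inner product,
\[
\|u(t_n) - u(t)\|_{H^1_x}^2 = \|u(t_n)\|_{H^1_x}^2 - 2\Real\langle u(t_n), u(t)\rangle_{H^1_x} + \|u(t)\|_{H^1_x}^2,
\]
and letting $n \to \infty$ the right-hand side tends to $0$ by weak convergence of the cross term together with convergence of the norms. Hence $u(t_n) \to u(t)$ strongly in $H^1$, which gives $u \in C([0,T];H^1(\T))$. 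The only substantive step is the continuity of the potential energy $E_p(u(t)) = \tfrac12|u(0,t)|^4$, and I do not expect this to be a genuine obstacle since the strong $H^s$-continuity for $s>\tfrac12$ has already been secured; everything else is a direct consequence of the conservation laws and the standard Hilbert-space fact.
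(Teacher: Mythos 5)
Your proposal is correct and follows essentially the same route as the paper's proof: express $\|u(t)\|_{H^1_x}^2$ via the conserved mass and energy minus the potential energy $E_p(u(t))$, deduce continuity of the norm from the $C([0,T];H^s(\T))$ regularity and the Sobolev embedding, and then upgrade weak continuity to strong continuity by the Radon--Riesz theorem. The only difference is that you spell out the Radon--Riesz step explicitly, which the paper leaves implicit.
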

\begin{proof}
The proof is similar to part of the proof \cite[Proposition 3.5]{HGKV24}. Recall that we already have $u \in C([0,T];H^s(\T)) \cap C([0,T];H^1_w(\T))$. Since $u \in C([0,T];H^s(\T))$ for $s \in (\frac{1}{2},1)$, the Sobolev embedding theorem implies that the mapping
\begin{equation*}
t \mapsto E_p(u(t))
\end{equation*}
is continuous. 
So
\begin{equation*}
t \mapsto \|u(t)\|^2_{H^1} = M(u_0) + E(u_0) - E_{p}(u(t))
\end{equation*}
is continuous, where we have used conservation of mass and energy. This and the Radon--Riesz theorem allow us to upgrade $C([0,T];H^1_w(\T))$ to $C([0,T];H^1(\T))$.
\end{proof}

Putting together all the propositions in Section \ref{Section:sNLS}, we obtain the following existence result for the \eqref{concentrated_NLS_introduction}.
\begin{theorem}[Existence of energy conserving solutions to cNLS]
\label{existence_theorem}
Let $u_0 \in H^1(\T)$. Then for any $T > 0$, there is a function $u \in C([0,T];H^1(\T))$ such that $u$ solves \eqref{concentrated_NLS_introduction}. Moreover, the function $u$ conserves mass and energy.
\end{theorem}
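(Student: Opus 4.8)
The plan is to assemble the results accumulated throughout Section \ref{Section:sNLS}, since the theorem is precisely the synthesis of the individual propositions proved above. First I would invoke Proposition \ref{smoothed_well_posedness_proposition} to produce, for each $\eps \in (0,1)$, a unique global solution $u^\eps \in C(\R;H^1(\T))$ of \eqref{smoothed_NLS_introduction}, together with the $\eps$-uniform bounds \eqref{uniformly_H1_bound} and \eqref{uniformly_H-1_bound}. These uniform bounds are exactly the input required by Lemma \ref{weak_limits_lemma}, so I would apply it to extract a subsequence $\{u^{\eps_j}\}$ and a limit $u$ enjoying: strong convergence in $C([0,T];H^s(\T))$ for every $s \in (\tfrac12,1)$, weak $H^1$ convergence at each time, and the regularity $u \in C([0,T];H^1_w(\T))$. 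This fixes the candidate solution $u$ once and for all.

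Next I would verify that this $u$ solves the equation in the required sense and conserves the invariants, simply by citing the relevant statements: Proposition \ref{existence_proposition} shows that $u(t)$ is a mild solution of \eqref{concentrated_NLS_introduction} for each $t \in [0,T]$; Proposition \ref{conservation_of_mass_proposition} gives $M(u(t)) = M(u_0)$; and Proposition \ref{conservation_of_energy_proposition} gives $E(u(t)) = E(u_0)$. Finally, Proposition \ref{continuity_H1_proposition} upgrades the weak continuity $u \in C([0,T];H^1_w(\T))$ to strong continuity $u \in C([0,T];H^1(\T))$, via the continuity of $t \mapsto \|u(t)\|_{H^1}^2 = M(u_0) + E(u_0) - E_p(u(t))$ and the Radon--Riesz theorem. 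Stringing these together yields all the assertions of the theorem.

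The genuinely delicate points lie in the propositions being cited rather than in the assembly itself, and I would flag two of them as the main obstacles. The first is passing to the limit in the nonlinear term (the quantity $\mathrm{IV}$ in the proof of Proposition \ref{existence_proposition}): here the strong $C([0,T];H^s)$ convergence for $s > \tfrac12$ is essential, since it is what lets us control $\||u^\eps|^2 u^\eps - |u|^2 u\|$ against $\|\delta\|_{H^{-s}}$ using the algebra estimate \eqref{Hs_bound} and the Sobolev embedding $H^s_x \hookrightarrow L^\infty_x$, while simultaneously handling the convergence $V^\eps \to \delta$ in $H^{-s}$. The second obstacle is that weak $H^1$ convergence only yields lower semicontinuity of the Dirichlet energy, hence \emph{a priori} just the inequality $E(u(t)) \le E(u_0)$; promoting this to equality requires the time-reversal argument of Proposition \ref{conservation_of_energy_proposition}, which exploits the fact that the construction is symmetric in $t$ and runs equally well backwards, so that a strict energy decay on $[0,T]$ would force a strict energy increase for the reversed solution and contradict the same one-sided bound.
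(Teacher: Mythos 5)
Your proposal is correct and follows exactly the paper's route: the theorem is stated there as the assembly of Proposition \ref{smoothed_well_posedness_proposition}, Lemma \ref{weak_limits_lemma}, and Propositions \ref{existence_proposition}, \ref{conservation_of_mass_proposition}, \ref{conservation_of_energy_proposition}, and \ref{continuity_H1_proposition}, with no further argument. Your identification of the two delicate points (passing to the limit in the nonlinearity, and upgrading the energy inequality to equality by time reversal) matches the paper's treatment.
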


\begin{remark}
The convergence of solutions to the smoothed NLS to solutions to \eqref{concentrated_NLS_introduction} is similar to the main result of \cite{CFNT14}, which was partially extended to three dimensions in \cite{CFNT17}. It was also central to the first author's derivation of the \eqref{concentrated_NLS_introduction} with Adami in \cite{AL25}. We also extend our convergence to strong convergence of the entire sequence in $C([0,T];H^s(\T))$ for any $s \in (\frac{1}{2},1)$. See Corollary \ref{strong_convergence_corollary} for a precise statement. 
\end{remark}

\subsection{Uniqueness of solutions}
\label{Section:Uniqueness}
To prove uniqueness of global solutions, we exploit the concentrated nature of the $\delta$ function. In particular, since the $\delta$ function is concentrated at zero, any solution in $L^\infty([0,T];H^1(\T))$ is completely determined by its value at $x = 0$. Therefore, to prove uniqueness, it suffices to show that the Volterra integral equation given by
\begin{equation}
\label{NLS_Volterra}
u(0,t) = S(t)u_0(0) - \ii \int_0^t \sum_{n \in \Z} \e^{-\ii n^2 (t-t')} |u(0,t')|^2u(0,t') \, \dd t',
\end{equation}
has a unique global solution. We will alternatively call \eqref{NLS_Volterra} the {\it Volterra equation} or {\it charge equation}. To simplify the notation, in what follows we will write $q(t) := u(0,t)$. Recall the notation
\begin{equation}
\label{S_delta_definition}
S^{\delta}(t) := \sum_{n \in \Z} \e^{-\ii n^2 t}.
\end{equation}
A direct computation using the Fourier coefficients of the kernel yields the following result.
\begin{lemma}
\label{Schrodinger_kernel_time_regularity}
Suppose $s > \frac{1}{4}$. Then $S^{\delta} \in H^{-s}(\T)$.
\end{lemma}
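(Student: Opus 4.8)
The plan is to compute the time–Fourier coefficients of $S^\delta$ explicitly and then read off the summability threshold directly from the definition of the $H^{-s}$ norm. Since each $n^2$ is an integer, every summand $\e^{-\ii n^2 t}$ is $2\pi$-periodic, so $S^\delta$ is a well-defined periodic distribution on $\T$, and its Fourier series in time is obtained by grouping together all frequencies that coincide. Concretely, writing $r(j) := \#\{n \in \Z : n^2 = j\}$, the summand $\e^{-\ii n^2 t}$ contributes to the frequency $-n^2$, so the $k$-th Fourier coefficient of $S^\delta$ equals $r(-k)$. Hence $\widehat{S^\delta}$ is supported on the nonpositive perfect squares, with $\widehat{S^\delta}(0) = 1$ and $\widehat{S^\delta}(-m^2) = 2$ for every integer $m \geq 1$, and all other coefficients vanishing.

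With the coefficients in hand, I would insert them into the definition of the $H^{-s}(\T)$ norm. Because the coefficient sequence is supported on the thin set of squares and the weight $(1+k^2)^{-s}$ depends only on $|k|$, the sum collapses to a single series over $m$:
\[
\|S^\delta\|_{H^{-s}(\T)}^2 \sim \sum_{k \in \Z} (1+k^2)^{-s}\,|\widehat{S^\delta}(k)|^2 \sim 1 + 4\sum_{m=1}^{\infty}(1+m^4)^{-s}\,.
\]
The tail behaves like $\sum_{m \geq 1} m^{-4s}$, which is finite precisely when $4s > 1$, that is, when $s > \tfrac14$. This is really the entire content of the lemma: since the Fourier mass of $S^\delta$ concentrates on the sparse set of squares, the effective decay exponent is $m^{-4s}$ rather than the $k^{-2s}$ one would see for a coefficient sequence supported on all of $\Z$, and this is exactly what lowers the critical regularity from $\tfrac12$ (the regularity of the Dirac mass itself) down to $\tfrac14$.

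There is no genuine analytic obstacle here; the computation is elementary once the arithmetic of the coefficients is identified, and the only point deserving a line of care is the distributional interpretation. I would observe that for $s > \tfrac14$ the estimate above shows that the partial sums of $\sum_k \widehat{S^\delta}(k)\,\e^{\ii k t}$ are Cauchy in $H^{-s}(\T)$ and hence converge to a genuine element of $H^{-s}(\T)$; since this limit has the same Fourier coefficients as the formal series defining $S^\delta$, the two agree as distributions, and therefore $S^\delta \in H^{-s}(\T)$ for every $s > \tfrac14$, as claimed.
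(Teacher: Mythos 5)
Your proposal is correct and is precisely the ``direct computation using the Fourier coefficients'' that the paper invokes (without writing out) to justify this lemma: the time-Fourier coefficients of $S^\delta$ are supported on the squares with multiplicity at most $2$, so the $H^{-s}$ norm reduces to $\sum_m m^{-4s}$, finite exactly for $s>\tfrac14$. No further comment is needed.
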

\begin{remark}
Since the Schrödinger kernel is periodic in time, we can only ever consider its regularity in $\R$ when multiplying by with a cut-off function. This is important, since when analysing the equation \eqref{NLS_Volterra}, we will need to consider convolutions on the real line.
\end{remark}
We recall the following basic lemma about extensions of periodic distributions.
\begin{lemma}
Suppose that $D \in H^{s}(\T)$ for $s \in \R$ and let $\chi$ be a smooth cut-off function. Then $\|D\chi\|_{H^{s}(\R)} \lesssim_{\chi} \|D\|_{H^{s}(\T)}$.
\end{lemma}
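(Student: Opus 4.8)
The plan is to work entirely on the Fourier side. Identifying $D$ with a periodic tempered distribution on $\R$, its distributional Fourier transform is a weighted Dirac comb supported on $\Z$, with weights proportional to the Fourier coefficients $\hat D(n)$; recall $\|D\|_{H^s(\T)}^2\sim\sum_n(1+n^2)^s|\hat D(n)|^2$. Since $\chi$ is smooth and compactly supported, $\hat\chi$ is a Schwartz function, and $D\chi$ is a compactly supported distribution whose Fourier transform is, up to the fixed normalisation constant, the superposition of translates
\[
\widehat{D\chi}(\xi)=\sum_{n\in\Z}\hat D(n)\,\hat\chi(\xi-n)\,.
\]
The task is therefore to bound $\int_\R (1+\xi^2)^s\bigl|\sum_n \hat D(n)\hat\chi(\xi-n)\bigr|^2\,\dd\xi$ by $\sum_n(1+n^2)^s|\hat D(n)|^2$, with an implied constant depending only on $\chi$ and $s$.

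First I would move the weight $(1+\xi^2)^{s/2}$ onto the translated bumps via Peetre's inequality $(1+\xi^2)^{s/2}\lesssim_s(1+(\xi-n)^2)^{|s|/2}(1+n^2)^{s/2}$, which holds for every real $s$ (the case $s<0$ following from the case $s\ge0$ after exchanging the roles of $\xi$ and $n$). Writing $a_n:=(1+n^2)^{s/2}|\hat D(n)|$ and $\psi(\eta):=(1+\eta^2)^{|s|/2}|\hat\chi(\eta)|$, this yields the pointwise estimate
\[
(1+\xi^2)^{s/2}\Bigl|\sum_n \hat D(n)\hat\chi(\xi-n)\Bigr|\lesssim_s\sum_n a_n\,\psi(\xi-n)\,.
\]
Crucially, $\psi$ is again Schwartz (and nonnegative). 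Since $\|a\|_{\ell^2(\Z)}^2\sim\|D\|_{H^s(\T)}^2$, it then suffices to prove the synthesis estimate $\bigl\|\sum_n a_n\psi(\cdot-n)\bigr\|_{L^2(\R)}\lesssim_\psi\|a\|_{\ell^2(\Z)}$.

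To establish the synthesis estimate I would expand the square and integrate (all terms being nonnegative, so Tonelli applies), obtaining $\sum_{n,m}a_n a_m\,b_{n-m}$ with overlap coefficients $b_k:=\int_\R\psi(\eta)\psi(\eta-k)\,\dd\eta$. As $\psi$ is Schwartz, its autocorrelation is Schwartz, so $(b_k)_k$ is rapidly decreasing and hence lies in $\ell^1(\Z)$. Schur's test (equivalently, the discrete Young inequality $\|b*a\|_{\ell^2}\le\|b\|_{\ell^1}\|a\|_{\ell^2}$ followed by Cauchy--Schwarz) then gives $\sum_{n,m}a_n a_m b_{n-m}\le\|b\|_{\ell^1}\|a\|_{\ell^2}^2$. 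Chaining the three displays produces $\|D\chi\|_{H^s(\R)}^2\lesssim_{\chi,s}\|D\|_{H^s(\T)}^2$, as claimed.

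The only genuine obstacle is the control of the cross terms $n\neq m$, i.e. the overlaps between the translated copies $\hat\chi(\cdot-n)$, which are present precisely because the frequency bumps are not disjointly supported. This is exactly what the rapid decay of the Schwartz function $\hat\chi$ buys us, through the finiteness $\|b\|_{\ell^1}<\infty$, while Peetre's inequality is what allows the argument to absorb the polynomial weight $(1+\xi^2)^{s/2}$ uniformly for both positive and negative $s$; everything else is routine.
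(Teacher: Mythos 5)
Your proof is correct. The paper simply recalls this lemma as standard and gives no proof, so there is nothing to compare against; your Fourier-side argument (Dirac comb convolved with $\hat\chi$, Peetre's inequality to transfer the weight $(1+\xi^2)^{s/2}$, and Schur/discrete Young to handle the overlaps of the translated bumps) is a complete and entirely standard way to establish it, valid for all real $s$. One cosmetic point: $\psi(\eta)=(1+\eta^2)^{|s|/2}|\hat\chi(\eta)|$ is not literally Schwartz, since taking the absolute value can destroy smoothness at zeros of $\hat\chi$; but all your argument uses is that $\psi$ is measurable, nonnegative and rapidly decreasing, which does hold and suffices for $(b_k)_k\in\ell^1(\Z)$.
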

We now collect some results we need to analyse the charge equation \eqref{NLS_Volterra}. Before proceeding, let us fix a smooth cut-off function $\chi : \R \to [0,1]$ satisfying
\begin{equation}
\label{cut-off_definition}
\chi(x) = 
\begin{cases}
    1 \quad \text{if } x \in [-1,1], \\
    0 \quad \text{if } x \notin [-2,2]
\end{cases}
\end{equation}
We also define the function $\chi_T := \chi(\frac{x}{T})$. In what follows, we will suppress the $\chi$ dependence of constants. We have the following lemma about convolution with $S^\delta$.

\begin{lemma}
\label{convolution_lemma}
Suppose that $T \in (0,1]$. Then for any $\sigma \in [0,\frac{1}{2})$, one has
\begin{equation*}
\|(\chi_T S^{\delta}) * (\Id_{[0,T]} F)\|_{H^\sigma_t(\R)} \lesssim T^\frac{1}{2} \|\Id_{[0,T]} F\|_{H^\sigma_t(\R)}.
\end{equation*}
\end{lemma}
\begin{proof}

We have
\begin{equation*}
\|(\chi_T S^{\delta}) * (\Id_{[0,T]} F)\|^2_{H^s} = \int_{\R} \langle \omega \rangle^{2s} |\widehat{\chi_T S^\delta}|^2(\omega) |\widehat{\Id_{[0,T]}F}|^2(\omega) \, \dd \omega.
\end{equation*}
So it suffices to show that 
\begin{equation*}
\|\widehat{\chi_T S^\delta}\|_{L^\infty} \lesssim T^{\frac{1}{2}}.
\end{equation*}
First, recall that $\hat{\chi}_T(\omega) = T \hat{\chi}(T\omega)$. It follows that
\begin{equation*}
\widehat{\chi_T S^\delta}(\omega) = T \sum_n \hat{\chi}(T(\omega + n^2)).
\end{equation*}
Recall that for any $c, R \in \R$, one has the bound
\begin{equation}
\label{trivial_bound}
\# \{n \in \Z : |c + n^2| \leq R\} \lesssim 1+ \sqrt{R}.
\end{equation}
Since $\chi \in C^\infty_0(\R)$, one has that its Fourier transform is Schwartz, and therefore
\begin{equation}
\label{Schwartz_decay}
|\hat{\chi}(T(\omega + n^2))| \lesssim \frac{1}{(1 + T|\omega + n^2|)^2}.
\end{equation}
Let us decompose the sum into dyadic blocks with $\{n : 2^{j-1} \leq 1 + T|\omega + n^2| < 2^{j}\}$ for $j\in\N$. By \eqref{trivial_bound}, the number of such $n \in \Z$ is bounded, up to constants, by $1+\sqrt\frac{2^{j}}{T}$. Using \eqref{Schwartz_decay} and $T \in (0,1]$, one has that
\begin{equation*}
\|\widehat{\chi_T S^\delta}\|_{L^\infty} \lesssim T\sum_{j \geq 1} 2^{-2(j-1)} \left( 1+\frac{2^{\frac{j}{2}}}{T^{\frac{1}{2}}} \right)\lesssim T+T^{\frac{1}{2}} \lesssim T^{\frac{1}{2}}.
\end{equation*}
\end{proof}
We are thus able to prove the following uniqueness result.
\begin{lemma}
\label{uniqueness_bounded_charge_lemma}
Let $T > 0$ and suppose that there is a solution to \eqref{NLS_Volterra} for $u_0 \in H^1(\T)$ which is in $L^\infty([0,T])$. Then the solution to the charge equation is unique.
\end{lemma}
\begin{proof}
Suppose that we have two solutions $q_1,q_2 \in L^\infty([0,T])$ to the charge equation \eqref{NLS_Volterra}, and fix $\tau$ sufficiently small to be determined later. Then
\begin{multline*}
\|q_1 - q_2\|_{L^2_{[0,\tau]}} = \|(\chi_\tau S^{\delta}) * (\Id_{[0,\tau]} (|q_1|^2q_1 - |q_2|^2q_2))\|_{L^2([0,\tau])} \lesssim \tau^{ \frac{1}{2}} \||q_1|^2q_1 - |q_2|^2q_2\|_{L^2_{[0,\tau]}} \\
\leq \tau^{\frac{1}{2}}\left(\|q_1\|_{L^\infty_{[0,\tau]}} + \|q_2\|_{L^\infty_{[0,\tau]}}\right)^2  \|q_1 - q_2\|_{L^2_{[0,\tau]}}.
\end{multline*}
Taking $\tau$ to be sufficiently small, one has that $q_1 = q_2$ almost everywhere. Since the $L^\infty([0,T])$ norm of the solution to \eqref{NLS_Volterra} is bounded, we can iterate the argument to the maximum time of existence.
\end{proof}
We can now prove Theorem \ref{existence_uniqueness_theorem}
\begin{proof}[Proof of Theorem \ref{existence_uniqueness_theorem}]
Since any solution to the \eqref{concentrated_NLS_introduction} is uniquely determined by the solution \eqref{NLS_Volterra}, the result follows from Lemma \ref{uniqueness_bounded_charge_lemma} and Theorem \ref{existence_theorem}. In particular, we use the fact that the mapping $t \mapsto u(0,t)$ is continuous for the solution constructed in Theorem \ref{existence_theorem}.

It only remains to prove that the solution depends continuously on the initial data. To prove this, fix $T > 0$ and take $u_{0,n} \to u_0$ in $H^1(\T)$. We know that for each $u_{0,n}$, there is a unique solution $u_{n} \in C([0,T];H^1(\T))$ which conserves the mass and the energy. Moreover
\begin{equation*}
\|u_n\|_{L^\infty([0,T])H^1(\T)} \leq C(\|u_{0,n}\|_{H^1}) \leq C(\|u_0\|_{H^1}).
\end{equation*}
Here we have used that $u_{0,n} \to u_0$ in $H^1(\T)$. By arguing as in the rest of this section, we thus get that there is a subsequence such that
\begin{equation*}
u_{n_j} \to u \quad \text{in} \,\, C([0,T];H^s) \,\, \text{for all} \,\,s < 1.
\end{equation*}
Moreover, recall that a sequence converges if every subsequence has a further subsequence that converges to the same element. Compactness and uniqueness guarantees this, so the entire sequence converges in $C([0,T];H^s(\T))$. Moreover, since $s > \frac{1}{2}$, this implies that
\begin{equation*}
\sup_{t \in [0,T]}|u_{n}(0,t) - u(0,t)| \to 0.
\end{equation*}
By conservation of energy, it thus follows that 
\begin{equation}
\label{norm_convergence}
\sup_{t \in [0,T]} |\|u_n(t)\|_{H^1} - \|u(t)\|_{H^1}| \to 0.
\end{equation}
Suppose for a contradiction that
\begin{equation*}
\|u_n - u\|_{L^\infty([0,T]) H^1} \not\to 0.
\end{equation*}
By definition, there exists a sequence of times $t_n$ such that
\begin{equation}
\label{convergence_contradiction}
\|u_n(t_n) - u(t_n)\|_{H^1} > \eta
\end{equation}
for some $\eta > 0$. Since $[0,T]$ is compact, we can assume that up to a subsequence, also denoted $t_n$, $t_n \to t_*$. Since
\begin{equation*}
\|u_n(t_n) - u(t_n)\|_{H^1} \leq \|u_n(t_n) - u(t_*)\|_{H^1} + \|u(t_*) - u(t_n)\|_{H^1},
\end{equation*}
and $u \in C([0,T];H^1)$, to show that \eqref{convergence_contradiction} does not hold, it suffices to show that
\begin{equation}
\label{contradiction_proof}
\|u_n(t_n) - u(t_*)\|_{H^1} \to 0.
\end{equation}
Recall $(u_n(t_n))$ is a uniformly bounded sequence in $H^1(\T)$, which follows from conservation of mass and energy, and that $u_{0,n} \to u$ in $H^1(\T)$. Therefore, there is a subsequence, also indexed as $n$, such that $u_n(t_n) \rightharpoonup w$ in $H^1(\T)$. One has that
\begin{equation*}
\|u_n(t_n) - u(t_n)\|_{H^s} \leq \|u_n-u\|_{L^\infty([0,T]) H^s} \to 0
\end{equation*}
 for any $s < 1$. Since
\begin{equation*}
\|u_n(t_n) - u(t_*)\|_{H^s} \leq \|u_n(t_n) - u(t_n)\|_{H^s} + \|u(t_n) - u(t_*)\|_{H^s} \to 0.
\end{equation*}
Here we use the continuity of $u$ in $H^s(\T)$. Therefore $u^n(t_n) \rightharpoonup u(t)$ in $H^s(\T)$ for any $s < 1$, so uniqueness of weak limits ensures that $w = u(t_*)$. Moreover, one has that
\begin{equation*}
\left| \|u_n(t_n)\|_{H^1} - \|u(t_*)\|_{H^1}\right| \leq \left| \|u_n(t_n)\|_{H^1} - \|u(t_n)\|_{H^1}\right| + \left| \|u(t_n)\|_{H^1} - \|u(t_*)\|_{H^1}\right|
\end{equation*}
The first term on the right hand side converges to zero by \eqref{norm_convergence}, and the second because of continuity in $H^1$ norm. Therefore we have $u_n(t_n) \to u(t_*)$ by Radon--Riesz, completing the proof of \eqref{contradiction_proof}.
\end{proof}

We also have the following corollary about the strong concentrated limit of the \eqref{smoothed_NLS_introduction}, which follows from a similar argument.
\begin{corollary}
\label{strong_convergence_corollary}
Suppose that $u_0 \in H^1(\T)$. Then for any $T > 0$, we have
\begin{equation*}
\lim_{\eps \to 0} \|u^{\eps} - u\|_{L^\infty_{[0,T]} H^1(\T)} = 0 \,.
\end{equation*}
\end{corollary}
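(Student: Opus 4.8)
The plan is to deduce convergence of the \emph{entire} sequence $\{u^\eps\}$ from the subsequential convergence already furnished by Lemma \ref{weak_limits_lemma}, using the uniqueness supplied by Theorem \ref{uniqueness_theorem}. The crucial structural observation is that nothing in the construction of Section \ref{Section:sNLS} depends on a \emph{particular} subsequence: given \emph{any} sequence $\eps_k \to 0$, the family $\{u^{\eps_k}\}$ is bounded in the space $W$ appearing in Lemma \ref{weak_limits_lemma} (by the uniform bounds of Proposition \ref{smoothed_well_posedness_proposition}), so the Aubin--Lions--Simon lemma produces a further subsequence converging strongly in $C([0,T];H^{s'}(\T))$ for every $s' \in (\tfrac12,1)$ to some limit $\tilde u$. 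Running the remainder of the construction (Propositions \ref{existence_proposition}, \ref{conservation_of_energy_proposition}, and \ref{continuity_H1_proposition}) on this subsequence shows that $\tilde u \in C([0,T];H^1(\T))$ and that $\tilde u$ solves \eqref{concentrated_NLS_introduction} with the same initial datum $u_0$.

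First I would reduce to a single smoothness index. Since $\|\cdot\|_{H^s_x} \le \|\cdot\|_{H^{s'}_x}$ whenever $s \le s'$, it suffices to establish the claim for one fixed exponent $s' \in (\tfrac12,1)$: for a general $s < 1$ one picks $s'$ with $\max(s,\tfrac12) < s' < 1$ and estimates $\|u^\eps - u\|_{L^\infty_{[0,T]}H^s_x} \le \|u^\eps - u\|_{L^\infty_{[0,T]}H^{s'}_x}$, so that convergence for $s'$ yields convergence for all $s < 1$.

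Next I would argue by contradiction to upgrade subsequential convergence to full-sequence convergence. Suppose $\|u^\eps - u\|_{L^\infty_{[0,T]}H^{s'}_x}$ does not tend to $0$ as $\eps \to 0$. Then there are $\rho > 0$ and a sequence $\eps_k \to 0$ with $\|u^{\eps_k} - u\|_{L^\infty_{[0,T]}H^{s'}_x} \ge \rho$ for all $k$. Applying the compactness step above to $\{u^{\eps_k}\}$ extracts a further subsequence converging in $C([0,T];H^{s'}(\T))$ to a solution $\tilde u$ of \eqref{concentrated_NLS_introduction} in $C([0,T];H^1(\T))$ with datum $u_0$. By Theorem \ref{uniqueness_theorem}, $\tilde u = u$, so along this further subsequence $\|u^{\eps_{k_j}} - u\|_{L^\infty_{[0,T]}H^{s'}_x} \to 0$, contradicting the lower bound $\rho$. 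Hence the full sequence converges, which gives the corollary.

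The only nontrivial point—and thus the main obstacle—is guaranteeing that the limit of \emph{every} subsequence is the \emph{same} function $u$. This is precisely what Theorem \ref{uniqueness_theorem} provides, and it is the step where the whole c$\CGL$ apparatus of Section \ref{Section:Uniqueness} is genuinely needed: without uniqueness one would only know that every subsequence clusters at \emph{some} solution, which is insufficient. Everything else is the standard ``every subsequence has a further subsequence converging to the same limit'' upgrade, combined with the elementary monotonicity of the $H^s$ norms in $s$.
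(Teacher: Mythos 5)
Your proposal is correct and follows essentially the same route as the paper: extract from any sequence $\eps_k \to 0$ a further subsequence converging via Aubin--Lions--Simon, identify the limit as a $C([0,T];H^1)$ solution by rerunning the existence argument, and invoke Theorem \ref{uniqueness_theorem} to conclude that every subsequential limit is $u$, so the whole sequence converges. The only cosmetic difference is that you phrase the subsequence principle as a proof by contradiction, whereas the paper invokes it directly.
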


\section{Well-posedness theory below the energy space} \label{Section:LWP}

\subsection{Existence of solutions}
As in the global case, we again analyse the problem of the Volterra equation
\begin{equation}
\label{NLS_Volterra_local}
u(0,t) = S(t)u_0(0) - \ii \int_0^t \sum_{n \in \Z} \e^{-\ii n^2 (t-t')} |u(0,t')|^2u(0,t') \, \dd t'.
\end{equation}
Again, we will write $q(t) := u(0,t)$. Before proceeding with the analysis of the charge equation, we need to prove the following result about the kernel $S^\delta$ defined in \eqref{S_delta_definition}.

\begin{lemma}
\label{H-s_to_zero_lemma}
Suppose that $\sigma > \frac{1}{4}$. Then for $T \in (0,1]$ and for $\chi$ as in \eqref{cut-off_definition}, we have
\begin{equation*}
\|\chi_T S^\delta\|_{H^{-\sigma}(\R)} \lesssim_{\chi} T^{\frac{1}{4}}.
\end{equation*}
\end{lemma}
\begin{proof}
By direct computation, one has
\begin{equation*}
\|\chi_T S^\delta\|_{H^{-\sigma}(\R)}^2 \sim \int_{\R} \langle \xi \rangle^{-2\sigma} \left| \sum_{n} T\hat{\chi}(T(\xi - n^2))\right|^2 \, \dd \xi.
\end{equation*}
By the proof of Lemma \ref{convolution_lemma}, we have that 
\begin{equation*}
 \left| \sum_{n} T\hat{\chi}(T(\xi - n^2))\right| \lesssim T^{\frac{1}{2}}.
\end{equation*}
So it suffices to show we have
\begin{equation}
\label{uniform_time_bound}
\int_{\R} \langle \xi \rangle^{-2\sigma} \left| \sum_{n} T\hat{\chi}(T(\xi - n^2))\right| \, \dd \xi < C < \infty,
\end{equation}
uniformly in time. To show \eqref{uniform_time_bound}, we note that it suffices to prove
\begin{equation}
\label{sum_bounded_uniformly}
\sum_n \langle n^2 - a\rangle^{-2\sigma} < C < \infty
\end{equation}
uniformly in $a \in \R$. Indeed, once we have \eqref{sum_bounded_uniformly}, one can apply the triangle inequality and change variables $v = T(\xi - n^2)$ in the integrand of \eqref{uniform_time_bound} to get
\begin{align*}
\int_{\R}
\langle\xi\rangle^{-2\sigma}
\left|
\sum_n T\widehat\chi(T(\xi-n^2))
\right|
\,\dd\xi
&\le
\sum_n
\int_{\R}
\langle\xi\rangle^{-2\sigma}
T
\left|
\widehat\chi(T(\xi-n^2))
\right|
\,\dd\xi\\
&=
\int_{\R}
|\widehat\chi(v)|
\sum_n
\left\langle
n^2+\frac{v}{T}
\right\rangle^{-2\sigma}
\,\dd v
\lesssim
\|\widehat\chi\|_{L^1},
\end{align*}
where we have used Tonelli's theorem and the fact that $\langle x \rangle = \langle -x \rangle$. So it only remains to show \eqref{sum_bounded_uniformly}. For this, we recall \eqref{trivial_bound} and write
\begin{equation*}
    \sum_n \langle n^2 - a\rangle^{-2\sigma} \lesssim \sum_{j} 2^{-2\sigma j} \#\{n : |n^2-a| \leq 2^{j}\} \lesssim \sum_j 2^{j(\frac{1}{2} - 2\sigma)} + 2^{-2\sigma j} < \infty
\end{equation*}
when $\sigma > \frac{1}{4}$. This completes the proof of \eqref{sum_bounded_uniformly}.
\end{proof}

We also have the following result, which gives the time regularity of the free flow computed at $0$.
\begin{lemma}
\label{regularity_free_evolution_lemma}
Suppose that $u_0 \in H^{s}(\T)$ for $s > \frac{1}{2}$. Then the mapping $ t \mapsto S(t)u_0(0)$ is in $H^{\sigma}_{t,\,loc}(\R)$ for all $\sigma\leq\frac{s}{2}$.
\end{lemma}
\begin{proof}
One has that
\begin{equation*}
S(t)u_0(0) = \sum_{\omega \in \Z} \e^{-\ii {\omega}^2 t} \hat{u}_0({\omega}).
\end{equation*}
Therefore
\begin{equation*}
\|S(t)u_0(0)\|_{H^\sigma_t(\T)}^2 = \sum_{\omega = \pm n^2} \langle {\omega} \rangle^{2\sigma} |\widehat{S(t)u_0(0)}({\omega})|^2 \lesssim \sum_{n \in \Z} \langle n \rangle^{4\sigma} |\hat{u}_0(n)|^2.
\end{equation*}
This finite if and only if $\sigma \leq \frac{s}{2}$.
\end{proof}

We have the following result, which guarantees the local existence of the charge.
\begin{lemma}
\label{free_regularity_lemma}
Suppose $u_0 \in H^{s}(\T)$ with $s \in (1/2,1)$. Let $\sigma \in (\frac{1}{4},\frac{1}{2}s]$. Then there is some $T = T(\|u_0\|_{H^s(\T)}) > 0$ such that there is a unique charge $q \in H^{\sigma}([0,T]) \cap L^\infty([0,T])$ that solves \eqref{NLS_Volterra_local} on $[0,T]$. Moreover, the charge depends continuously on the initial condition.
\end{lemma}
\begin{proof}
We make the contraction argument on the set 
\begin{equation*}
    B:=\{q' \in L^\infty_t \cap H^\sigma_t: \max\{\|q'\|_{L^\infty_t([0,\tau])}, \|q'\|_{H^\sigma_t([0,\tau])}\} \leq 2\|u_0\|_{H^s_x(\T)}\},
\end{equation*}
for $\tau$ sufficiently small to be determined later. Define the map
\begin{equation*}
    \Gamma q := S(t)u_0(0) - \ii \int_0^t S^\delta(t-t') |q|^2q(t') \, \dd t'.
\end{equation*}
We note that one has that $\max\{\|S(t)u_0(0)\|_{L^\infty([0,\tau])}, \|S(t)u_0(0)\|_{H^\sigma([0,\tau])}\}$ is bounded by $\|u_0\|_{H^s(\T)}$ by the local well posed theory of the linear Schr\"odinger equation and Lemma \ref{regularity_free_evolution_lemma}. So to prove that the map is well-defined, we focus on the second term in $\Gamma$. By duality
\begin{equation}
\label{Linfty_charge_bound}
\left\|\int_0^t S^\delta(t-t') |q|^2q(t') \, \dd t'\right\|_{L^\infty_t} \leq \|\chi_\tau S^\delta\|_{H^{-\sigma}_t} \|\Id_{[0,T]}|q|^2q\|_{H^{\sigma}_t} \leq C(\chi) \tau^{\frac{1}{4}} \|\Id_{[0,T]}q\|_{H^\sigma_t}\|\Id_{[0,T]}q\|_{L^\infty_t}^2.
\end{equation}
By Lemma \ref{convolution_lemma}, we have
\begin{equation*}
\left\|\int_0^t S^\delta(t-t') |q|^2q(t')\right\|_{H^\sigma_t} \lesssim \tau^{\frac{1}{2}}\|\Id_{[0,\tau]}q\|_{H^\sigma_t} \|\Id_{[0,\tau]}q\|^2_{L^\infty_t}.
\end{equation*}
Picking $\tau$ sufficiently small, $\Gamma$ is well-defined. Similarly, one gets a contraction as well. The continuity follows from the fact that we constructed the charge via a contraction mapping argument.
\end{proof}
\begin{remark}
\label{blow-up_remark}
By standard Volterra integral equation theory, it follows that the solution satisfies a blow-up criterion on the size of $|q(t)|$; see for example \cite{Mil71}.
\end{remark}
\begin{remark}
\label{continuity_remark}
We are able to upgrade the charge from being $L^\infty([0,T])$ to being $C_t([0,T])$. Indeed, for $s < t$, we write
\begin{multline}
\label{LWP_continuity_1}
\int_0^t S^\delta(t-t') |q|^2q(t') \, \dd t' - \int_0^{s} S^\delta(s-t') |q|^2q(t') \, \dd t'  \\ 
= \int_{s}^t S^\delta(t-t') |q|^2q(t')  \, \dd t' + 
\int_0^{s} \left[S^\delta(t-t')-S^\delta(s-t')\right] |q|^2q(t') \, \dd t'.
\end{multline}
In $L^\infty$ norm, the first term goes to zero as $s \to t$ by arguing as in \eqref{Linfty_charge_bound}. The second term in \eqref{LWP_continuity_1} goes to zero in $L^\infty$ as $s \to t$ by the continuity of Sobolev norms under translation. Here we use that $|q|^2q \Id_{[0,T]} \in H^{\sigma}(\R)$ and the fact $S^\delta \in H^{-\sigma}_{\mathrm{loc}}(\R)$.
\end{remark}

By arguing similarly to the proof of Proposition \ref{existence_proposition}, one obtains the following result.
\begin{proposition}
\label{local_existence_proposition}
Suppose that $u_0 \in H^s(\T)$ for $s \in (\frac{1}{2},1)$. Then for some $T = T(\|u_0\|_{H^s(\T)})$ and for any $\eps > 0$, there is a function $u \in L^\infty([0,T];H^{-\frac{1}{2}-\eps}(\T))$ such that $u$ solves the \eqref{concentrated_NLS_introduction}.
\end{proposition}

\subsection{Uniqueness of local solutions}
To prove uniqueness of local solutions of the \eqref{concentrated_NLS_introduction}, we need to show that we have persistence of regularity, which ensures that the solution is completely determined by its initial condition. Before proceeding, we require the following technical result, which is proved in Appendix \ref{SEC:proposition}.
\begin{proposition}
\label{sobolev_compact_proposition}
Suppose that $f \in H^s(\R)$ has compact support. Then
\begin{equation*}
\sum_{\omega \in \Z} \langle \omega \rangle^{2s} |\hat{f}(\omega)|^2 \lesssim \|f\|_{H^s(\R)}^2 < \infty.
\end{equation*}
Here the implied constant depends on the measure of the support of $f$.
\end{proposition}
\begin{remark}
When applying Proposition \ref{sobolev_compact_proposition}, we will apply to functions of the form $F = \Id_{[0,T]}|q|^2q$. This will allow us to extend an argument of Adami and Teta from \cite{AT01}, which originally held for functions defined on the line.
\end{remark}
\begin{proposition}[Persistence of regularity for local solutions]
\label{persistence_regularity_local_proposition}
Suppose that $u$ is the solution constructed in Proposition \ref{local_existence_proposition}. Then $u \in L^\infty([0,T];H^s(\T))$.
\end{proposition}
\begin{proof}
We argue similarly to \cite[(2.18)]{AT01}. One has persistence of regularity and continuity in time for the free portion of the solution. One computes that the nonlinear part $u^{\mathrm{NL}}$ of the solution satisfies
\begin{equation*}
\|u^{\mathrm{NL}}(t)\|^2_{{H}^\beta(\T)} \sim \sum_{k} (1 + |k|^{2\beta}) \left|\int_0^t \dd t' \e^{-\ii k^2 (t-t')} |q|^2q(t')\right|^2.
\end{equation*}
For the first term in the sum, one bounds by 
\begin{equation*}
\sum_k \left|\int_0^t \dd t' \, \e^{\ii k^2 t'} |q|^2q(t')\right|^2 \leq \sum_k \left|\int_0^t \dd t' \e^{\ii k t'} |q|^2q(t')\right|^2 \lesssim \|\widehat{|q|^2q}\|_{L^2} < \infty.
\end{equation*}
Here we used Proposition \ref{sobolev_compact_proposition}.\\ 
For the second term, we make the change of summation variable $k^2 \mapsto k$ and write
\begin{equation*}
\|u\|^2_{\dot{H}^\beta(\T)} = \sum_{k} {|k|}^\beta \left|\widehat {|q|^2q}(-k)\right|^2 \lesssim \|\Id_{[0,T]}q\|^2_{H^{\frac{\beta}{2}}}.
\end{equation*}
$\langle \omega \rangle$ is invariant under taking $\omega \to -\omega$, so fine. Here we used Proposition \ref{sobolev_compact_proposition}. It follows that $u \in H^\beta(\T)$ if $\frac{\beta}{2} \leq \frac{s}{2}$. In particular, we have that $u \in L^\infty_{[0,T]} H^s(\T)$
\end{proof}
\begin{remark}
    By arguing similarly to \cite[(2.19)]{AT01} (but adapted to our situation), one also obtains that Sobolev norm is continuous in time. This is a consequence of the charge also being continuous in time.
\end{remark}
\begin{remark}
We note that a similar argument to the proof of Proposition \ref{persistence_regularity_local_proposition} can be used to show that we have Hadamard stability. Indeed, one has that $\widehat{u^{\mathrm{NL}} - v^{\mathrm{NL}}} = \widehat{u^{\mathrm{NL}}} - \widehat{v^{\mathrm{NL}}}$. Then arguing as in Proposition \ref{persistence_regularity_local_proposition}, one has
\begin{equation*}
\|u^{\mathrm{NL}}(t) - v^{\mathrm{NL}}(t)\|_{H^s} \leq \|\Id_{[0,t]}(|q_u|^2q_u - |q_v|^2q_v)\|_{H^{\frac{s}{2}}(\R)}\lesssim_{u_0,v_0} \|q_u - q_v\|_{H^{\frac{s}{2}}([0,T])} + \|q_u-q_v\|_{L^\infty([0,T])}.
\end{equation*}
Recalling that the solution depends continuously on the initial condition, the result follows from taking a supremum in time. Here we used that the linear solution is Hadamard stable.
\end{remark}
We thus obtain the following corollary, since the solution is now completely determined by its value at zero.
\begin{corollary}
\label{uniqueness_local_solution_corollary}
Suppose that $u_0 \in H^s(\T)$ for $s \in (\frac{1}{2},1)$. Then for some $T = T(\|u_0\|_{H^s(\T)})$ and for any $\eps > 0$, there is a unique function $u \in L^\infty([0,T];H^{s}(\T))$ such that $u$ solves the \eqref{concentrated_NLS_introduction}.
\end{corollary}

\subsection{Conservation of mass}
To complete the proof of Theorem \ref{LWP_theorem}, it remains only to prove that we have conservation of mass. We note that because we have only proved persistence of regularity in $H^1(\T)$, and not for higher Sobolev spaces, it is not clear that we can use a standard density approach where one uses the equivalence of strong and weak solutions. Instead, we use a truncated initial data approach. We thus prove the following result.
\begin{proposition}[Conservation of mass]
\label{conservation_mass_proposition}
Suppose that $u_0 \in H^s(\T)$ for $s \in (\frac{1}{2},1)$. For the local time of existence $T > 0$, let $u \in L^\infty([0,T]; H^s(\T))$ be the solution to \eqref{concentrated_NLS_introduction}. Then we have $\|u(t)\|_{L^2_x(\T)} = \|u_0\|_{L^2_x(\T)}$ for any $ t \in [0,T]$.
\end{proposition}
\begin{proof}
We consider the concentrated NLS given by
\begin{equation}
\label{truncated_CNLS_equation}
\begin{cases}
\ii \partial_t u^N = -\Delta u^N + \delta |u^N|^2u^N, \\
u^N_0 := \displaystyle\sum_{|n| \leq N} \hat{u}_0(n) \e^{\ii n x}\in H^\infty(\T).
\end{cases}
\end{equation}
We note that $u^N_0 \to u_0$ in $H^s(\T)$ and since $u^N_0 \in H^1(\T)$, one has a unique (global) solution to \eqref{truncated_CNLS_equation} that lives in $L^\infty_t([0,T]) H^1_x(\T)$. Moreover, this solution conserves mass. We note that the charge $q^N(t) := u^N(0,t)$ is bounded in $L^\infty([0,T]) \cap H^\frac{s}{2}([0,T])$ uniformly in $N$, which follows from arguing as in the proof of Lemma \ref{free_regularity_lemma}. It suffices to prove that
\begin{align*}
\left|\|u(0)\|_{L^2(\T)} - \|u^N(0)\|_{L^2(\T)}\right| \to 0, \\
\left|\|u^N(0)\|_{L^2(\T)} - \|u^N(t)\|_{L^2(\T)}\right| \to 0, \\
\left|\|u^N(t)\|_{L^2(\T)} - \|u(t)\|_{L^2(\T)}\right| \to 0.
\end{align*}
The first term follows from the definition of $u^N_0$. Using the conservation of mass of solutions with initial condition in $H^1(\T)$, the second term equals $0$. Using the reverse triangle inequality in the third term, it suffices to consider the norm of the difference. By arguing as in the proof of Proposition \ref{persistence_regularity_local_proposition}, one notes that
\begin{equation*}
\|u^N(t) - u(t)\|_{L^2_x(\T)} \lesssim_{\|u_0\|_{H^s}} \|q(t)-q^N(t)\|_{L^2_t([0,T])}.
\end{equation*}
Here we used Plancherel theorem and H\"older's inequality as well as the bound of $\|q\|_{L^\infty_t([0,T])}\lesssim \|u_0\|_{H^s_x}$. So it suffices to show that
\begin{equation*}
\|q-q^N\|_{L^2_t({[0,T]})} \to 0.
\end{equation*}
Fix $\tau > 0$ sufficiently small to be determined later. Recalling Lemma \ref{convolution_lemma}, one has
\begin{align}
    \|q-q^N\|_{L^2_t([0,\tau])} &\leq \|S(t)(u_0-u_0^N)(0)\|_{L^2_t([0,\tau])}\nonumber\\
    &\qquad+  C\tau^{\frac{1}{2}}\left(\|q\|_{L^\infty_t ([0,\tau])} + \|q^N\|_{L^\infty_t ([0,\tau])}\right)^2  \|q - q^N\|_{L^2_t([0,\tau])}.
    \label{charge_convergence_1}
\end{align}
Here we have multiplied by an appropriate smooth cut-off function and used H\"older's inequality. For the first term in \eqref{charge_convergence_1}, we have
\begin{equation*}
\|S(t)(u_0-u_0^N)(0)\|_{L^2_t([0,\tau])} \leq \tau^{\frac{1}{2}} \|S(t)(u-u_0^N)(0)\|_{L^\infty_t ([0,\tau])} \lesssim \tau^{\frac{1}{2}}\|u_0-u^N_0\|_{H^s_x(\T)}
\end{equation*}
by the Sobolev embedding theorem and the unitarity of $S(t)$ on Sobolev spaces. For the second term, we recall that $q(t),q^N(t)$ are uniformly bounded in $t$ and $N$ by construction. Therefore, we can take $\tau$ sufficiently small to obtain
\begin{equation*}
\|q-q^N\|_{L^2_t([0,\tau])} \leq \tau^{\frac{1}{2}}\|u_0-u^N_0\|_{H^s(\T)} \to 0.
\end{equation*}
Since the $\tau$ depends only on things which are uniformly bounded on $[0,T]$, we can iterate this argument up to the local time of existence.
\end{proof}

\section{Open Problems}
\label{Section:open}
We briefly remark on a number of questions left open by our work, and possible future directions of research.
\begin{itemize}
\item From the perspective of mathematical physics, one would be interested in the case of mixed nonlinearities. For example, the case of $\delta|u|^2u + |u|^2u$. In this setting, one is able to construct globally bounded $H^1$ solutions to the corresponding smoothed NLS. However, the $|u|^2u$ term breaks the Volterra integral equation structure, making the proof of uniqueness more challenging.
\item The problem of well-posedness below the endpoint of the Sobolev embedding theorem is interesting. The problem is mass-critical, and we plan to study the problem of local well-posedness in $H^s(\T)$, $ s > 0$ in future work.
\item We have not addressed the problem of persistence of regularity for $u_0 \in H^s(\T)$, for $ s > 1$. Indeed, in analogy to the results in \cite{AT01}, one expects the solution to lie in $L^\infty ([0,T];H^{s}(\T))$ if $s < \frac{3}{2}$. We plan to address this in a future work.
\item There is also the question of the problem when $X$ is $\T^2$ or $\T^3$. In this setting, there are a number of additional challenges. Indeed, the proof of the concentrated limit of \eqref{smoothed_NLS_introduction} for $\R^3$ was only partially solved in \cite{CFNT17} for shrinking potentials. There is no known solution for $X=\R^2$. This is partially because of the larger domain mentioned in Remark \ref{higher_dimensional_remark}. 
\end{itemize}

\appendix

\section{Proof of Proposition \ref{sobolev_compact_proposition}}
\label{SEC:proposition}
In this section, we prove Proposition \ref{sobolev_compact_proposition}. We first prove the following preliminary result.
\begin{lemma}
\label{periodisation_lemma}
Let $s > 0$ and suppose that $f \in {H^s(\R)}$ is a compactly supported function. Define the periodisation of $f$ as
\begin{equation*}
\mathrm{Per}f(t) \equiv g(t) := \sum_{n \in \Z} f(t + 2\pi n).
\end{equation*}
Then $\|\mathrm{Per}f\|_{H^s(\T)} \lesssim \|f\|_{H^s(\R)}$.
\end{lemma}
\begin{proof}
Since $f$ is compactly supported, we know that the sum defining $g$ is a finite sum. Therefore
\begin{equation}
\label{periodic_extension_definition}
    g(t) = \sum_{|n| \leq M} f(t+ 2 \pi n).
\end{equation}
Suppose that $s = m$ is a positive integer and take an integer $0 \leq j \leq m$. Then
\begin{equation*}
D^j g(t) = \sum_{|n| \leq M} D^j f(t+2\pi n).
\end{equation*}
By Cauchy--Schwarz inequality, one has
\begin{equation*}
|D^j g (t)|^2 \lesssim M \sum_{|n| \leq M} |D^j f(t+ 2\pi n)|^2.
\end{equation*}
Summing over $j$, it follows that
\begin{equation*}
\|g\|_{H^m(\T)} \lesssim_M \|f\|_{H^m(\R)}.
\end{equation*}
So $\mathrm{Per}f$ is bounded on $H^m(\T)$ for $f$ with fixed compact support. The result for general $s > 0$ follows from interpolation.
\end{proof}
We can now prove Proposition \ref{sobolev_compact_proposition}.
\begin{proof}[Proof of Proposition \ref{sobolev_compact_proposition}]
{Let $g$ be the periodic extension of $f$. Since $f$ is compactly supported, we let $M$ be a finite integer such that \eqref{periodic_extension_definition} holds.} One computes that
\begin{equation*}
\hat{g}(\omega) = \frac{1}{2\pi} \int_0^{2\pi} g(t) \, \e^{-\ii \omega t} \, \dd t = \frac{1}{2\pi} \int_0^{2\pi} \sum_{|n| \leq M} f(t + 2\pi n) \, \e^{-\ii \omega t} \, \dd t.
\end{equation*}
Changing variables $t' = t + 2\pi n$ in each term of the (finite) sum, we obtain
\begin{equation*}
\hat{g}(\omega) = \frac{1}{2\pi}\sum_{|n| \leq M} \int_{2 \pi n}^{2 \pi (n+ 1)} f(t') \, \e^{-\ii \omega t'} \, \dd t' \sim_M \hat{f}(\omega).
\end{equation*}
So
\begin{equation*}
\sum_{\omega \in \Z} \langle \omega \rangle^{2s} |\hat{f}(\omega)|^2 {\sim_{M} }\|g\|^2_{H^s(\T)} \lesssim \|f\|^2_{H^s(\R)},
\end{equation*}
where we use the assumption that $f$ is compactly supported to apply Lemma \ref{periodisation_lemma}.
\end{proof}

\subsection*{Acknowledgements}
We thank Zied Ammari, Nicolas Camps, Kihyun Kim, and Tristan Robert for helpful discussions at various points, and Alessandro Teta for constructive suggestions. The authors also thank an anonymous reviewer for their comments on a previous version of this manuscript.
J.L. is partially supported by the Swiss National Science Foundation through the NCCR SwissMAP and the SNSF Eccellenza project PCEFP\_181153, by the Swiss State Secretariat for Research and Innovation through the project P.530.1016 (AEQUA), by Global - Learning \& Academic research institution for Master's$\cdot$PhD students, and Postdocs (G-LAMP) Program of the National Research Foundation of Korea (NRF) grant funded by the Ministry of Education (No.~RS-2025-25442355), and by a grant from Kyung Hee University in 2026 (KHU-20262263). A.R. acknowledges partial funding by the ANR-DFG project (ANR-22-CE92-0013, DFG PE 3245/3-1 and BA 1477/15-1), and by the Italian Ministry of University and Research (MUR) through
the PRIN 2022 grant ``OpeN and Effective quantum Systems (ONES)''. 
\bigskip

\bibliographystyle{abbrv}
\bibliography{refss}

\end{document}